\newtheorem{theorem}{Theorem}[section]
\newtheorem*{theorem*}{Theorem}
\newtheorem{lemma}{Lemma}[section]
\newtheorem*{remark*}{Remark}
\newtheorem{definition}{Definition}[section]
\newcommand{\ayla}[1]{{\color{violet}{Ayla: #1}}}
\newcommand{\eps}{\varepsilon}
\newcommand{\R}{\mathbb{R}}
\newcommand{\todo}[1]{{\color{red}{\sc(#1)}}}
\begin{document}

\numberwithin{equation}{section}

\title{On the number of exceptional intervals to the prime number theorem in short intervals}

\author[Gafni]{Ayla Gafni}
\address{Department of Mathematics, University of Mississippi, University MS 38677}
\email{argafni@olemiss.edu}
\author[Tao]{Terence Tao}
\address{Department of Mathematics, UCLA, 405 Hilgard Ave, Los Angeles CA 90024}
\email{tao@math.ucla.edu}

\keywords{}
\subjclass[2020]{}
\thanks{}

\date{\today}

\begin{abstract} For a fixed exponent $0 < \theta \leq 1$, it is expected that we have the prime number theorem in short intervals $\sum_{x \leq n < x+x^\theta} \Lambda(n) \sim x^\theta$ as $x \to \infty$.  From the recent zero density estimates of Guth and Maynard, this result is known for all $x$ for $\theta > \frac{17}{30}$ and for almost all $x$ for $\theta > \frac{2}{15}$.  Prior to this work, Bazzanella and Perelli obtained some upper bounds on the size of the exceptional set where the prime number theorem in short intervals fails.  We give an explicit relation between zero density estimates and exceptional set bounds, allowing for the most recent zero density estimates to be directly applied to give upper bounds on the exceptional set via a small amount of computer assistance.
\end{abstract}

\maketitle

\section{Introduction and background} \label{intro}

\subsection{Prime number theorem in short intervals}

The prime number theorem asserts that\footnote{See \Cref{notation-sec} for our conventions on asymptotic notation.}
$$ \sum_{n \leq x} \Lambda(n) \sim x$$
as $x \to \infty$, where $\Lambda$ is the von Mangoldt function.  Based on this, as well as random models such as the Cram\'er random model \cite{cramer}, it is natural to conjecture that the prime number theorem also holds in short intervals in the sense that
\begin{equation}\label{xyn}
 \sum_{x < n \leq x+y} \Lambda(n) \sim y
\end{equation}
as $x \to \infty$ whenever $y = x^\theta$ for a fixed $0 < \theta \leq 1$; the classical prime number theorem then corresponds to the case $\theta=1$.  An easy averaging argument (cf.  \cite[Corollary 1(i)]{bp}) shows that the difficulty of this claim increases as $\theta$ decreases, or equivalently if \eqref{xyn} holds for $\theta = \theta_0$ then it also holds for $\theta_0 \leq \theta \leq 1$.  A well-known result of Maier \cite{Maier} shows that the bound \eqref{xyn} can break down if $y$ is only of size $\log^A x$ for any fixed $A>0$, but we will not work with such small sizes of $y$ here.

It is well-known that these questions are tied to zero density theorems for the Riemann zeta function $\zeta(s)$.
For $0 \le \sigma < 1$ and $T>0$, let $\mathcal{Z}(\sigma,T)$ denote the (multi-)set
\begin{equation}
\mathcal{Z}(\sigma, T) \coloneqq  \left\{ \rho = \beta + i\gamma : \zeta(\rho)=0, \beta\ge \sigma, |\gamma|\le T \right\}
\end{equation}
of zeroes of $\zeta$ (counted with multiplicity), and let $N(\sigma, T) = \#\mathcal{Z}(\sigma, T)$ be the number of such zeroes (again counting multiplicity).  For any $0 \leq \sigma < 1$, let $A(\sigma)$ denote the least exponent for which one has the zero density theorem
$$N(\sigma, T) \ll_{\sigma, \eps} T^{A(\sigma)(1-\sigma) + \eps}$$
for any $\eps>0$, or equivalently that
\begin{equation}\label{nsig-up}
  N(\sigma, T) \leq T^{A(\sigma)(1-\sigma) + o(1)}
\end{equation}
as $T \to \infty$ (holding $\sigma$ fixed).  If $N(\sigma,T)$ vanishes for all $T$, then we adopt the convention that $A(\sigma)=-\infty$.

For $0 \leq \sigma \leq 1/2$, the Riemann--von Mangoldt formula together with the functional equation give $N(\sigma,T) \asymp T \log T$ for all large $T$, hence
\begin{equation}\label{asig-small}
A(\sigma) = \frac{1}{1-\sigma}
\end{equation}
in this regime.  For $1/2 < \sigma \leq 1$ the situation is not fully resolved; see \Cref{zero_density_table} and \Cref{fig:a_sigma} for the current best upper bounds on $A(\sigma)$.  The \emph{density hypothesis} asserts that $A(\sigma) \leq 2$ for all $\sigma$; this is currently only known for $0 \leq \sigma \leq 1/2$ and $25/32 < \sigma \leq 1$ \cite{bourgain-density}, although it is also known to follow from the Lindel\"of hypothesis (LH) \cite{ingham}.  The Lindel\"of hypothesis also implies that $A(\sigma) \leq 0$ for $3/4 < \sigma \leq 1$ \cite{halasz}.  Finally, the Riemann hypothesis of course implies that $A(\sigma)=-\infty$ for all $1/2 < \sigma < 1$.  We also make the trivial observation that $A(\sigma) (1-\sigma)$ is non-increasing in $\sigma$.

\begin{figure}
  \centering
  \includegraphics[width=0.8\textwidth]{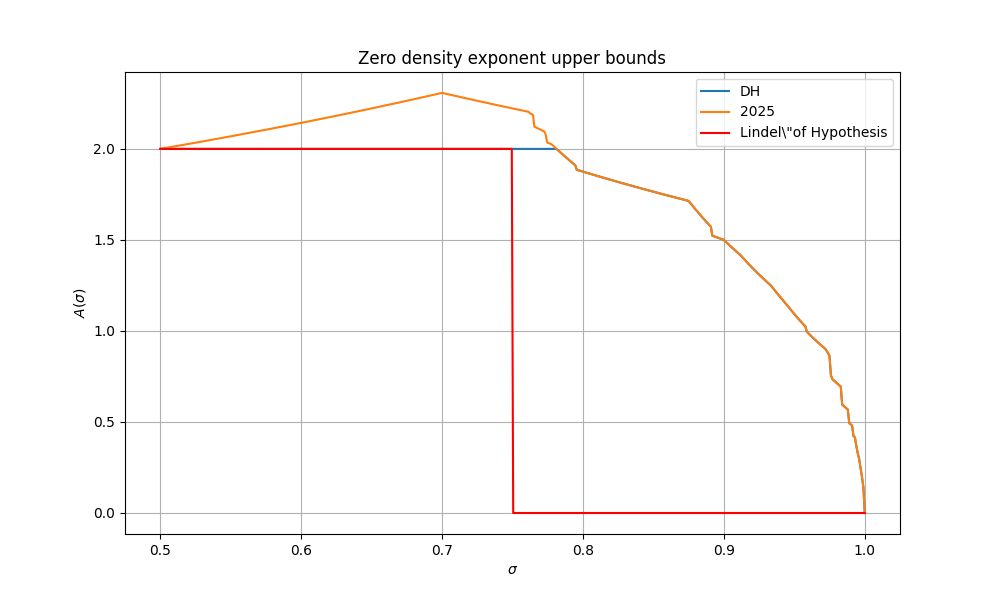}
  \caption{Current best upper bounds on $A(\sigma)$ unconditionally; assuming DH; and assuming LH. 
  }\label{fig:a_sigma}
  \end{figure}

\begin{table}[ht]
\def\arraystretch{1.2}
    \centering
    \small
    \begin{tabular}{|c|c|c|}
    \hline
    $A(\sigma)$ bound & Range & Reference\\
    \hline
    $\dfrac{3}{2 - \sigma}$ & $\dfrac{1}{2} \leq \sigma \le \dfrac{7}{10} = 0.7$ & Ingham \cite{ingham} \\
    \hline
    $\dfrac{15}{3+5\sigma}$ & $\dfrac{7}{10} \leq \sigma < \dfrac{19}{25} = 0.76$ & Guth--Maynard \cite{GuthMaynard}\\
    \hline
    $\dfrac{9}{8\sigma - 2}$ & $\dfrac{19}{25} \leq \sigma < \dfrac{127}{167} = 0.7604\ldots$ & Ivic \cite{ivic} \\
    \hline
    $\dfrac{15}{13\sigma - 3}$ & $\dfrac{127}{167} \leq \sigma < \dfrac{13}{17} = 0.7647\ldots$ & Ivic \cite{ivic}\\
    \hline
    $\dfrac{6}{5\sigma - 1}$ & $\dfrac{13}{17} \leq \sigma < \dfrac{17}{22} = 0.7727\ldots$ & Ivic \cite{ivic}\\
    \hline
    $\dfrac{2}{9\sigma - 6}$ & $\dfrac{17}{22} \leq \sigma < \dfrac{41}{53} = 0.7735\ldots$ & Tao--Trudgian--Yang \cite{tty}\\
    \hline
    $\dfrac{9}{7\sigma - 1}$ & $\dfrac{41}{53} \leq \sigma < \dfrac{7}{9} = 0.7777\ldots$ & Ivic \cite{ivic}\\
    \hline
    $\dfrac{9}{8(2\sigma-1)}$ & $\dfrac{7}{9} \le \sigma < \dfrac{1867}{2347} = 0.7954\ldots$ & Tao--Trudgian--Yang \cite{tty} \\
    \hline
    $\dfrac{3}{2\sigma}$ & $\dfrac{1867}{2347} \le \sigma < \dfrac{4}{5} = 0.8$ & Bourgain \cite{bourgain} \\
    \hline
    $\dfrac{3}{2\sigma}$ & $\dfrac{4}{5} \le \sigma < \dfrac{7}{8} = 0.875$ & Ivic \cite{ivic} \\
    \hline
    $\dfrac{3}{10\sigma - 7}$ & $\dfrac{7}{8} \le \sigma < \dfrac{279}{314} = 0.8885\ldots$ & Heath--Brown \cite{HB-2} \\
    \hline
    $\dfrac{24}{30\sigma - 11}$ & $\dfrac{279}{314} \le \sigma < \dfrac{155}{174} = 0.8908\ldots$ & CDV \cite{cdv} \\
    \hline
    $\dfrac{24}{30\sigma - 11}$& $\dfrac{155}{174} \le \sigma \le \dfrac{9}{10} = 0.9$ & Ivic \cite{ivic}\\
    \hline
    $\dfrac{3}{10\sigma - 7}$ & $\dfrac{9}{10} < \sigma \le \dfrac{31}{34} = 0.9117\ldots$ & Tao--Trudgian--Yang \cite{tty}\\
    \hline
    $\dfrac{11}{48\sigma - 36}$ & $\dfrac{31}{34} < \sigma < \dfrac{14}{15} = 0.9333\ldots$ & Tao--Trudgian--Yang \cite{tty}\\
    \hline
    $\dfrac{391}{2493\sigma - 2014}$ & $\dfrac{14}{15} \le \sigma < \dfrac{2841}{3016} = 0.9419\ldots$ & Tao--Trudgian--Yang \cite{tty}\\
    \hline
    $\dfrac{22232}{163248\sigma - 134765}$ & $\dfrac{2841}{3016} \le \sigma < \dfrac{859}{908} = 0.9460\ldots$ & Tao--Trudgian--Yang \cite{tty}\\
    \hline
    $\dfrac{356}{2742\sigma - 2279}$ & $\dfrac{859}{908} \le \sigma < \dfrac{23}{24} = 0.9583\ldots$ & Tao--Trudgian--Yang \cite{tty}\\
    \hline
    $\dfrac{3}{24\sigma - 20}$ & $\dfrac{23}{24} \leq \sigma < \dfrac{2211487}{2274732} = 0.9721\ldots$ & Pintz \cite{pintz} \\
    \hline
    $\dfrac{86152}{1447460\sigma - 1311509}$ & $\dfrac{2211487}{2274732} \le \sigma < \dfrac{39}{40} = 0.975$ & Tao--Trudgian--Yang \cite{tty} \\
    \hline
    $\dfrac{2}{15\sigma - 12}$ & $\dfrac{39}{40} \leq \sigma < \dfrac{41}{42} = 0.9761\ldots$ & Pintz \cite{pintz} \\
    \hline
    $\dfrac{3}{40 \sigma - 35}$ & $\dfrac{41}{42} \leq \sigma < \dfrac{59}{60} = 0.9833\ldots$ & Pintz \cite{pintz} \\
    \hline
    $\dfrac{3}{n(1 - 2(n - 1)(1 - \sigma))}$ & \begin{tabular}{@{}c@{}}$1 - \dfrac{1}{2n(n - 1)} \le \sigma < 1 - \dfrac{1}{2n(n + 1)}$\\(for integer $n \ge 6$)\end{tabular} & Pintz \cite{pintz} \\
    \hline
    \end{tabular}
    \caption{Current best upper bounds on $A(\sigma)$, taken from the ANTEDB \cite{antedb}.}  \label{zero_density_table}

    \end{table}

The following result is standard (see, e.g., \cite[\S 13]{GuthMaynard} or \cite[Theorem 10.5, Exercise 10.5.6]{ik}), and will also be reproven here:

\begin{theorem}[Zero density theorems and PNT in short intervals]\label{folklore}  Let $A_0>0$ be such that
  \begin{equation}\label{asigma}
    A(\sigma) \leq A_0
  \end{equation}
   for all $1/2 \leq \sigma < 1$.  Let $0 < \theta < 1$ be fixed, and set $y = x^\theta$.
  \begin{itemize}
    \item[(i)] (PNT in all short intervals) If $\theta > 1-\frac{1}{A_0}$, then \eqref{xyn} holds for all $x$.
    \item[(ii)] (PNT in almost all short intervals) If $\theta > 1-\frac{2}{A_0}$, then \eqref{xyn} holds for all $x$ outside of an exceptional set of density zero.
  \end{itemize}
\end{theorem}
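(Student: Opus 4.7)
The plan is to apply the truncated Riemann--von Mangoldt explicit formula
$$ \psi(x+y) - \psi(x) = y - \sum_{|\gamma| \le T} \frac{(x+y)^\rho - x^\rho}{\rho} + O\!\left(\frac{x \log^2(xT)}{T}\right), $$
valid for $1 \le T \le x$ with the sum running over non-trivial zeros $\rho = \beta + i\gamma$ of $\zeta$, and then to bound the zero sum using the hypothesis \eqref{asigma}, combined where needed with the classical zero-free region.

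For part (i), I would bound each term pointwise by $|(x+y)^\rho - x^\rho|/|\rho| \le y\, x^{\beta-1}$ (via the mean value theorem together with $\beta < 1$), and then apply Abel partial summation in $\sigma$ to convert $\sum_{|\gamma|\le T,\, \beta \ge 1/2} x^{\beta-1}$ into a weighted integral against $N(\sigma, T)$. Inserting \eqref{nsig-up} bounds the zero sum by roughly
$$ y \log x \int_{1/2}^{1} (T^{A_0}/x)^{1-\sigma}\, T^{o(1)}\, d\sigma $$
plus a manageable boundary contribution. Since the integrand is monotone in $\sigma$, its maximum on $[1/2,1]$ is either $1$ or $(T^{A_0}/x)^{1/2}$; choosing $T$ slightly below $x^{1/A_0}$ keeps this maximum equal to $1$ and concentrates the integral near $\sigma = 1$, at which point the zero-free region furnishes a further power saving. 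The truncation term is $x^{1 - 1/A_0 + o(1)}\log^2 x$, which is $o(y) = o(x^\theta)$ precisely because $\theta > 1 - 1/A_0$, and a parallel analysis shows the zero sum is also $o(y)$ under the same hypothesis.

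For part (ii), the plan is a second-moment estimate:
$$ \int_X^{2X} |\psi(x+y) - \psi(x) - y|^2\, dx. $$
After applying the explicit formula, the main contribution is the mean square of the zero sum. Using $\frac{(x+y)^\rho - x^\rho}{\rho} = \int_0^y (x+s)^{\rho-1}\, ds$, expanding the square and swapping the orders of integration yields inner integrals of the form $\int_X^{2X} x^{\rho + \overline{\rho'} - 2}\, dx \ll X^{\beta + \beta' - 1}/(1 + |\gamma - \gamma'|)$. The diagonal ($\rho = \rho'$) contribution is $\ll y^2 \sum_{|\gamma|\le T} X^{2\beta - 1}$, which partial summation and \eqref{nsig-up} bound by $y^2 X^{1+o(1)}$ at the balancing choice $T \approx X^{2/A_0}$. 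The off-diagonal terms are damped by $1/(1+|\gamma-\gamma'|)$; the standard zero-spacing estimate $\#\{\rho : |\gamma - t| \le 1\} \ll \log(|t|+2)$ together with Cauchy--Schwarz shows they do not exceed the diagonal. The resulting second-moment bound is $o(y^2 X)$ precisely when $\theta > 1 - 2/A_0$ (the squared truncation error $X^3 \log^4 X / T^2$ being small under the same condition), and Chebyshev's inequality converts this into the density-zero exceptional set claim.

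The main technical obstacle is the careful tracking of the $T^{o(1)}$ loss in \eqref{nsig-up}, which requires that the asymptotic $o(1)$ be uniform in $\sigma \in [1/2,1]$, and, in part (ii), the verification that the off-diagonal contribution is genuinely dominated by the diagonal; a careless use of the triangle inequality on the off-diagonal sum would cost a spurious factor of $T$ and break the argument. The balancing calculation of $T$ in each part against the truncation error, while routine, needs to be done carefully so that the strict inequalities $\theta > 1 - 1/A_0$ and $\theta > 1 - 2/A_0$ can be fully exploited.
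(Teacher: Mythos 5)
Your overall strategy (explicit formula, pointwise bound plus partial summation for (i), second moment plus Chebyshev for (ii)) is the classical route, and away from the edge it matches the machinery the paper uses to prove its more general \Cref{gen-bound}, from which \Cref{folklore} is then deduced. However, there is a genuine gap in how you treat zeros with $\beta$ close to $1$, and it affects both parts. You insert \eqref{nsig-up} throughout $1/2 \leq \sigma \leq 1$ and assert that, after choosing $T$ slightly below $x^{1/A_0}$ (resp.\ $X^{2/A_0}$), ``the zero-free region furnishes a further power saving'' near $\sigma = 1$. It does not: the Vinogradov--Korobov region only excludes $1-\sigma \ll (\log T)^{-2/3}(\log\log T)^{-1/3}$, so the factor $x^{\sigma-1}$ saves at most $\exp\left(-c (\log x)^{1/3} (\log\log x)^{-1/3}\right)$, a sub-power amount, whereas the loss inherent in \eqref{nsig-up} is a genuine power: for any fixed $\eps>0$ the hypothesis \eqref{asigma} only gives $N(\sigma,T) \ll_{\sigma,\eps} T^{A_0(1-\sigma)+\eps}$, and near $\sigma=1$ the factor $T^{\eps}$ — let alone the non-uniformity in $\sigma$ of the implied constants, which you flag but do not resolve, and which cannot be repaired there by the usual finite-grid/monotonicity trick since that also costs a power $x^{O(1/J)}$ — completely swamps the zero-free-region saving. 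Consequently the contribution of $\sigma \in [1-\eta_0,1]$ is not shown to be $o(y)$ in (i), nor $o(\delta^2 y^2 X)$ in (ii), and it cannot be shown from \eqref{asigma} alone.

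The missing ingredient is a log-free zero-density estimate at the edge: the paper's \Cref{right-edge} uses the Tur\'an/Ford bound $N(1-\eta,T) \ll T^{C\eta^{3/2}} \log^{O(1)} T$, which for $\eta \leq \eta_0$ small gives $N(1-\eta,T) x^{-\eta} \ll x^{-\eta/2} \log^{O(1)} x$, and only then does the Vinogradov--Korobov lower bound on $\eta$ yield the required saving. Once the edge is disposed of in this way, your remaining steps — the finite partition in $\sigma$ to handle uniformity away from the edge, the choice of $T$ against the truncation error, and the diagonal/off-diagonal analysis with Chebyshev in (ii) — are sound and essentially reproduce the paper's \Cref{linfty-bound} and \Cref{l2-bound}. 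Note that the paper's own deduction of \Cref{folklore}(ii) from \Cref{gen-bound} likewise needs $A(\sigma) \to 0$ as $\sigma \to 1$ (Tur\'an) to keep $\sigma$ bounded away from $1$; this is the same edge input in disguise, and your write-up needs it (or \Cref{right-edge}) explicitly.
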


For instance, from \Cref{zero_density_table} one can obtain the upper bound $A(\sigma) \leq \frac{30}{13}$ for all $1/2 \leq \sigma < 1$, and hence one has the PNT in all intervals for $\theta > \frac{17}{30}$ and for almost all intervals for $\theta > \frac{2}{15}$; see \cite[Corollary 1.3, Corollary 1.4]{GuthMaynard}.  Assuming the density hypothesis, these ranges can be improved to $\theta > \frac{1}{2}$ and $\theta > 0$ respectively.

\subsection{Bounding the exceptional set}

In this paper we are interested in bounding the size of the exceptional set where \eqref{xyn} fails.  This problem was studied in \cite{baz-1}, \cite{bp}, \cite{baz-2}, and we now reproduce some notation from these papers.

\begin{definition}[Exceptional set exponent]\label{excep}\cite{bp}, \cite{baz-2}  Fix $0 < \theta < 1$.  For any $X > 1$ and $\delta > 0$, let ${\mathcal E}_\delta(X,\theta)$ denote the set of all $x \in [X,2X]$ such that
  $$ \left|\sum_{x < n \leq x+y} \Lambda(n) - y \right| \geq \delta y$$
where $y \coloneqq x^\theta$, and then let $\mu_\delta(\theta)$ denote the infimum of all exponents $\xi$ such that
$$ |{\mathcal E}_\delta(X,\theta)| \ll_{\delta,\theta} X^{\xi}$$
for all sufficiently large $X$, where $|{\mathcal E}_\delta(X,\theta)|$ denotes the Lebesgue measure of ${\mathcal E}_\delta(X,\theta)$; we adopt the convention that $\mu_\delta(\theta)=-\infty$ if ${\mathcal E}_\delta(X,\theta)$ vanishes for all sufficiently large $X$.  Finally, we set\footnote{This function $\mu(\theta)$ should not be confused with the M\"obius function, or the growth exponent for the Riemann zeta function $\zeta(\sigma+it)$ for a fixed $\sigma$.}
$$ \mu(\theta) \coloneqq \sup_{\delta > 0} \mu_\delta(\theta).$$
\end{definition}

Unpacking the definitions (setting $\delta = 1, 1/2, 1/3,\dots$ and performing a suitable diagonalization), we see that $\mu(\theta)$ is the least exponent for which the prime number theorem in short intervals \eqref{xyn} with $y = x^\theta$ holds for all $x \geq 1$ outside of an exceptional set ${\mathcal E}_\theta \subset [1,+\infty)$ with
$$ |{\mathcal E}_\theta \cap [X, 2X]| \ll X^{\mu(\theta)+o(1)}$$
as $X \to \infty$ (holding $\theta$ fixed). In particular, if $\mu(\theta) < 1$, then \eqref{xyn} would hold outside of a density zero exceptional set of parameters $x>1$.  It is easy to show that $\mu$ is a non-increasing function of $\theta$; see \cite[Corollary 1(i)]{bp}.  The exponent $\mu(\theta)$ also gives information about prime gaps: if $p_n$ denotes the $n^{\mathrm{th}}$ prime, it is easy to see that $p_{n+1}-p_n \leq 2 p_n^{\theta}$ (say) for all $n$ outside of an exceptional set ${\mathcal N}_\theta$ with\footnote{In view of results such as \cite{islam}, \cite{peck}, \cite{jarviniemi}, \cite{li}, it should be possible to improve upon \eqref{nn} by combining the methods in this paper with sieves such as the Harman sieve, but we will not attempt to do so here.}
\begin{equation}\label{nn}
   |{\mathcal N}_\theta \cap [N, 2N]| \ll N^{\mu(\theta)-\theta+o(1)}
\end{equation}
as $N \to \infty$, since any large prime gap $p_{n+1}-p_n > 2 p_n^{\theta}$ will lead to a violation of the prime number theorem \eqref{xyn} in intervals $[x,x+x^\theta]$ for $p_n \leq x \leq p_n + p_n^\theta$.

From \Cref{folklore}(i), we have $\mu(\theta) = -\infty$ for $\theta > 1 - \frac{1}{A_0}$.  It is then reasonable to expect upper bounds on $\mu(\theta)$ that are between $0$ and $1$ in the range $1-\frac{2}{A_0} < \theta \leq 1-\frac{1}{A_0}$.  In this regard, we have the following results in the literature\footnote{We thank Kaisa Matom\"aki for these and other references.}.

\begin{lemma}[Previous bounds on $\mu$]\label{baz-bound}\
  \begin{itemize}
    \item[(i)]\cite[Theorem 2(i)]{bp} For sufficiently small $\Delta>0$, we have $\mu(1/6 + \Delta) \leq 1 - c\Delta$ and $\mu(7/12-\Delta) \leq \frac{5}{8} + \frac{7}{4}\Delta + O(\Delta^2)$ for some absolute constant $c>0$.
  \item[(ii)] \cite[Theorem 2(ii)]{bp} Assuming RH, we have $\mu(\theta) \leq 1-\theta$ for $0 < \theta \leq 1/2$.
    \item[(iii)] \cite[Lemma 1]{baz-2} We have
  $$
  \mu(\theta) \leq
  \begin{cases}
  \frac{3(1-\theta)}{2} & \frac{1}{2} < \theta \leq \frac{11}{21} \\
  \frac{47-42\theta}{35}& \frac{11}{21} < \theta \leq \frac{23}{42} \\
  \frac{36\theta^2-96\theta+55}{39-36\theta} & \frac{23}{42} < \theta \leq \frac{7}{12} \\
  \end{cases}
  $$
  \end{itemize}
\end{lemma}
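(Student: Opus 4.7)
The plan is to establish all three bounds by combining a truncated version of the Riemann--von Mangoldt explicit formula with mean-value estimates for short sums over zeros of $\zeta$, then leveraging the available zero density estimates. For a parameter $T$ chosen on the order of $x/y = x^{1-\theta}$ (up to logarithms), the starting point is the standard truncated explicit formula
$$
\psi(x+y) - \psi(x) - y = -\sum_{\rho \in \mathcal{Z}(1/2,T)} \frac{(x+y)^\rho - x^\rho}{\rho} + O\!\left(\frac{x (\log xT)^2}{T}\right),
$$
with $T$ chosen so that the error is comfortably smaller than $\delta y$.

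Next I would estimate a mean value of the zero sum
$$
S(x) \coloneqq \sum_{\rho \in \mathcal{Z}(1/2,T)} \frac{(x+y)^\rho - x^\rho}{\rho}
$$
over $x \in [X,2X]$. Writing $(x+y)^\rho - x^\rho = \rho \int_x^{x+y} u^{\rho-1}\,du$ and expanding the second moment reduces the problem to a weighted pair sum over zeros, which after Hilbert's inequality / Montgomery--Vaughan combined with a dyadic decomposition in $\sigma$ yields an estimate of the shape
$$
\int_X^{2X} |S(x)|^2\,dx \ll y^2 X^{o(1)} \sup_{1/2 \leq \sigma < 1} X^{2\sigma-1}\, N(\sigma, T).
$$
Chebyshev's inequality $|{\mathcal E}_\delta(X,\theta)|\, (\delta y)^2 \leq \int |S|^2$ then converts this, upon inserting $N(\sigma,T) \leq T^{A(\sigma)(1-\sigma)+o(1)}$ and $T = X^{1-\theta}$, into a bound on $\mu(\theta)$ of the essential form $\sup_{\sigma} [(2\sigma-1) + (1-\theta) A(\sigma)(1-\sigma)]$.

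For (ii), the Riemann hypothesis collapses the zero sum to just the $\sigma = 1/2$ contribution, and a direct application of Hilbert's inequality gives $\int |S|^2 \ll y^2 X^{1-\theta+o(1)}$, yielding $\mu(\theta) \leq 1-\theta$ at once. For (i) and (iii), one substitutes the zero density bounds available when \cite{bp,baz-2} were written --- chiefly Ingham's $A(\sigma) \leq 3/(2-\sigma)$ for $1/2 \leq \sigma \leq 7/10$, Huxley's uniform bound $A(\sigma) \leq 12/5$, and the Huxley--Ivic refinements for $\sigma \in (7/10, 1)$ --- and optimizes the resulting expression. In certain sub-ranges a refinement via a $2k$-th moment with $k>1$, combined with mean-value theorems for Dirichlet polynomials of length $T$, is needed to suppress the trivial $\sigma \to 1^-$ behavior of the direct second moment.

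The main obstacle is the case analysis in this optimization. The maximizing $\sigma$ depends on $\theta$ and on which piecewise branch of the zero density bound is active, so the argument splits into sub-ranges of $\theta$ corresponding to transitions between the regimes where the different density bounds dominate; this is exactly the origin of the breakpoints $\theta = 11/21$ and $\theta = 23/42$ in (iii), as well as the three distinct rational/quadratic formulas on those intervals. The near-boundary Taylor expansions in (i), which describe how $\mu(\theta)$ falls just past $\theta = 1/6 = 1 - 2/A_0$ and $\theta = 7/12 = 1 - 1/A_0$ with $A_0 = 12/5$, come from a second-order analysis of the supremum near the saturating value of $\sigma$; making these expansions quantitatively sharp (as opposed to merely $\mu(\theta) < 1$ or $\mu(\theta) < 5/8$) is the most delicate bookkeeping step, and is where the implicit constant $c>0$ in (i) is produced.
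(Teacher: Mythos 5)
Your overall strategy (truncated explicit formula, moments of the zero sum over $x\in[X,2X]$, Chebyshev, then zero-density input) is the right family of ideas --- it is essentially the machinery behind Theorems \ref{gen-bound} and \ref{refined-bound}, and the paper itself treats Lemma \ref{baz-bound} as a quotation of \cite{bp}, \cite{baz-2}, later indicating how (ii) and (iii) follow from that machinery. But as written your proposal has two genuine gaps. First, the handling of $\sigma$ near $1$ is wrong: the second-moment bound gives the exponent $\mu_{2,\sigma}(\theta)=(1-\theta)(1-\sigma)A(\sigma)+2\sigma-1$, which tends to $1$ as $\sigma\to 1$, and higher moments do \emph{not} suppress this (the $2k$-th moment exponent $(1-\theta)(1-\sigma)A^{(2k)}(\sigma)+2k\sigma-2k+1$ also tends to $1$). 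What actually restricts the supremum is an $L^\infty$/pointwise estimate showing that the zero sum is $o(y)$ on any $\sigma$-range where $A(\sigma)<\frac{1}{1-\theta}$, together with the Vinogradov--Korobov zero-free region and a log-free (Tur\'an-type) density bound at the extreme right edge; in particular, for part (iii) with $\theta>\frac12$ it is Jutila's bound $A(\sigma)\le 2$ for $\sigma>\frac{11}{14}$ that confines the supremum to $\sigma\le\frac{11}{14}$ (and the failure of exactly this step for $\theta<\frac12$ is why the further claims of \cite{baz-2} are incomplete). Your proposal never supplies this restriction, so the bound you would obtain from the second moment alone is trivial ($\ge 1$).

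Second, the fourth-moment input is misidentified. The $L^4$ mean value of the zero sum is governed by the \emph{additive energy} of zeros, $N^*(\sigma,T)$, i.e. by Heath-Brown's estimates such as $A^*(\sigma)\le\frac{18-19\sigma}{(4-2\sigma)(1-\sigma)}$ (giving $A^*(3/4)\le 6$); ``mean-value theorems for Dirichlet polynomials of length $T$'' do not provide this, and the density estimates you list (Ingham, Huxley, Ivi\'c) cannot by themselves yield the stated constants. Indeed the paper notes explicitly that the $L^2$-only bound (Theorem \ref{gen-bound}) is not strong enough to recover part (i): at $\theta=\frac{7}{12}-\Delta$ the $L^2$ exponent at $\sigma=\frac34$ is $\frac35(1-\theta)+\frac12\to\frac34$, whereas the claimed value $\frac58$ is exactly $\mu_{4,3/4}(\theta)=\frac32(1-\theta)$, which needs $A^*(3/4)\le 6$; similarly the branch $\frac{3(1-\theta)}{2}$ in (iii) comes from $\mu_{4,3/4}$, not from any $\mu_{2,\sigma}$. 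So to make the proof work you must (a) add the $L^\infty$ step restricting to $\{\sigma: A(\sigma)\ge\frac{1}{1-\theta}\}$ plus the zero-free-region treatment of the right edge, and (b) replace the vague $2k$-th moment remark by the explicit $N^*(\sigma,T)$ (Heath-Brown) input driving the $L^4$ bound, before carrying out the piecewise optimization in $\sigma$ and $\theta$.
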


In \cite[Lemma 1]{baz-2}, further bounds were claimed in the region $\frac{1}{6} < \theta \leq \frac{1}{2}$, but unfortunately the argument is incomplete in that range; see \Cref{numerics-sec} for more discussion.  We also note that there is extensive literature \cite{selberg}, \cite{wolke}, \cite{cook}, \cite{huxley}, \cite{yu}, \cite{HB-1}, \cite{HB-III}, \cite{HB-2}, \cite{ivic-2}, \cite{peck}, \cite{matomaki}, \cite{HB-3}, \cite{stadlmann}, \cite{jarviniemi} on related quantities such as $\sum_{p_n \leq x} (p_{n+1}-p_n)^2$.  For instance, in \cite{selberg} it was shown that RH implies the bound $\sum_{n \leq x} (p_{n+1}-p_n)^2 \ll x \log^3 x$, which (up to $x^{o(1)}$ errors) can be recovered from \Cref{baz-bound}(ii) using \eqref{nn}.  We will not attempt to summarize the rest of this literature here, but refer the reader to \cite{antedb} instead.

The results in \Cref{baz-bound}(i) relied on the best available zero density estimates at the time, which among other things could only establish \eqref{asigma} for $A_0=12/5$.  In fact, the methods in that paper give a general relationship between $\mu$ and $A$, which is our first main result:

  \begin{theorem}[General bound]\label{gen-bound}  For any $0 < \theta < 1$, one has
\begin{equation}\label{muth}
   \mu(\theta) \leq \inf_{\eps>0} \sup_{\stackrel{0 \leq \sigma < 1}{A(\sigma) \geq \frac{1}{1-\theta}-\eps}} \mu_{2,\sigma}(\theta)
\end{equation}
where
$$ \mu_{2,\sigma}(\theta) \coloneqq (1-\theta)(1-\sigma) A(\sigma) + 2\sigma - 1.$$
Here we adopt the convention that the empty supremum is $-\infty$.
\end{theorem}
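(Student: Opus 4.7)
The plan is to combine the truncated Riemann--von Mangoldt explicit formula with an $L^2$ mean-value argument over $x \in [X, 2X]$, using a dyadic decomposition of the zeros of $\zeta$ by real part and inserting the density bound \eqref{nsig-up}.

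\textbf{Step 1 (explicit formula).} Fix small parameters $\delta, \eps, \eta > 0$ and set $T := X^{1-\theta+\eta}$. The standard truncated explicit formula (as in \cite[\S13]{GuthMaynard}) gives, for $x \in [X, 2X]$,
\[
  \psi(x+y) - \psi(x) - y = -\sum_{\rho \in \mathcal{Z}(0, T)} \frac{(x+y)^\rho - x^\rho}{\rho} + O\!\left(\frac{X\log^2(XT)}{T} + \log X\right),
\]
where $y = x^\theta \asymp X^\theta$ and the choice of $T$ makes the error $o(y)$. Decompose $\mathcal{Z}(0, T) = \mathcal{S} \sqcup \mathcal{I}$ where $\mathcal{S} := \{\rho : A(\Re\rho) \geq \tfrac{1}{1-\theta} - \eps\}$. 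A dyadic decomposition of $\mathcal{I}$ by real part, combined with the trivial bound $|\tfrac{(x+y)^\rho - x^\rho}{\rho}| \ll y X^{\beta-1}$ and \eqref{nsig-up}, yields $\sum_{\rho \in \mathcal{I}}|\tfrac{(x+y)^\rho - x^\rho}{\rho}| = o(y)$ pointwise on $[X, 2X]$ (the condition $A(\Re\rho) < \tfrac{1}{1-\theta} - \eps$ provides a power savings in each dyadic piece bounded away from $\Re\rho = 1$; near $\Re\rho = 1$ the standard zero-free region eliminates any residual contribution). Hence for $x \in \mathcal{E}_\delta(X, \theta)$ and $X$ large,
\[
  \left|\sum_{\rho \in \mathcal{S}} \frac{(x+y)^\rho - x^\rho}{\rho}\right| \geq \tfrac{\delta}{4}\, y.
\]

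\textbf{Step 2 ($L^2$ mean value).} Chebyshev yields
\[
  \tfrac{\delta^2 y^2}{16}\, |\mathcal{E}_\delta(X, \theta)| \leq \int_X^{2X} \left|\sum_{\rho \in \mathcal{S}} \frac{(x+y)^\rho - x^\rho}{\rho}\right|^2 dx.
\]
Partition $\mathcal{S}$ dyadically by real part into $J := \lceil \log X \rceil$ strips $R_j$ with $\beta \in [\sigma_j, \sigma_{j+1}]$, $\sigma_j := j/J$, and apply Cauchy--Schwarz in $j$. Using the integral representation $\tfrac{(x+y)^\rho - x^\rho}{\rho} = \int_x^{x+y} t^{\rho - 1}\,dt$ and Fubini, a pair $(\rho_1, \rho_2) \in R_j \times R_j$ contributes $O\!\left(\tfrac{y^2 X^{\beta_1 + \beta_2 - 1}}{1 + |\gamma_1 - \gamma_2|}\right)$ to the squared $L^2$ integral. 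The Riemann--von Mangoldt spacing bound $N(T+1) - N(T) \ll \log T$ controls the off-diagonal sum, giving
\[
  \int_X^{2X} \left|\sum_{\rho \in R_j} \frac{(x+y)^\rho - x^\rho}{\rho}\right|^2 dx \ll y^2 X^{2\sigma_j - 1}\, N(\sigma_j, T)\, (\log XT)^{O(1)}.
\]

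\textbf{Step 3 (density bound and supremum).} Inserting $N(\sigma_j, T) \leq T^{(1-\sigma_j) A(\sigma_j) + o(1)}$ and $T = X^{1-\theta+\eta}$, each strip contributes at most $y^2 X^{\mu_{2,\sigma_j}(\theta) + O(\eta) + o(1)}$, so
\[
  |\mathcal{E}_\delta(X, \theta)| \ll_\delta X^{O(\eta) + o(1) + \max_j \mu_{2,\sigma_j}(\theta)},
\]
where the maximum is over $j$ with $R_j \neq \emptyset$, in particular over $\sigma_j$ with $A(\sigma_j) \geq \tfrac{1}{1-\theta} - \eps$. Letting $J \to \infty$ converts the discrete maximum into $\sup_{\sigma: A(\sigma) \geq \tfrac{1}{1-\theta}-\eps} \mu_{2,\sigma}(\theta)$; then letting $\eta \to 0$, passing to $\sup_\delta$, and taking $\inf_\eps$ yields \eqref{muth}.

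\textbf{Main obstacle.} The principal technical hurdle is the off-diagonal pair-sum estimate in Step~2, which relies on the Riemann--von Mangoldt spacing bound to control $\sum_{\rho_2 \in R_j} \tfrac{1}{1 + |\gamma_1 - \gamma_2|} \ll (\log T)^2$ uniformly in $\rho_1$. A secondary subtlety is the boundary handling in Step~1 --- discarding ``insignificant'' zeros via a pointwise bound while respecting the $\eps$-threshold --- which requires care for zeros near $\Re \rho = 1$ and is precisely the reason the statement features the $\inf_\eps$ and the slack $-\eps$ in the range of the supremum.
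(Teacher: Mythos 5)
Your overall strategy is the same as the paper's (explicit formula, split of the zeros by real part, pointwise elimination of the low-density range, an $L^2$ mean value plus Chebyshev for the rest, then the density bound \eqref{nsig-up}); the paper in fact proves the stronger refined theorem with an additional $L^4$ ingredient and obtains this statement as a corollary. However, there is a genuine gap in your execution, and it sits exactly at the point the paper takes pains to engineer around: the bound \eqref{nsig-up} is purely asymptotic \emph{for each fixed} $\sigma$ (the implied constant, equivalently the $o(1)$, depends on $\sigma$ and is not known to be uniform), while your argument applies it at $X$-dependent locations. You classify zeros one by one via $A(\Re\rho)$ and then discretize into $J=\lceil \log X\rceil$ strips with endpoints $\sigma_j=j/J$ that move with $X$; inserting $N(\sigma_j,T)\le T^{(1-\sigma_j)A(\sigma_j)+o(1)}$ simultaneously for $\sim\log X$ values of $\sigma_j$, and likewise the power-saving claim for the ``insignificant'' zeros in Step~1 (whose count in a dyadic piece is controlled by $N$ evaluated at some $X$-dependent abscissa, not by $A$ at each zero's own real part), is not justified by the definition of $A$. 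Relatedly, $R_j\neq\emptyset$ does not give $A(\sigma_j)\ge\frac{1}{1-\theta}-\eps$ at the left endpoint, which is what you actually need both to use $N(\sigma_j,T)$ and to place $\mu_{2,\sigma_j}(\theta)$ inside the supremum in \eqref{muth}; this particular point can be repaired using the monotonicity of $A(\sigma)(1-\sigma)$ at the cost of slightly enlarging $\eps$, but you do not invoke it. Finally, near $\sigma=1$ the zero-free region alone does not suffice: significant zeros with $\beta$ close to $1$ would be sent to your $L^2$ step, where the non-uniformity of \eqref{nsig-up} is at its worst.

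The paper's proof avoids all of this by fixing a large but $X$-independent number $J$ of strips $[j/J,(j+1)/J)$, performing the case analysis strip by strip at the fixed left endpoints (so only finitely many fixed values of $\sigma$ ever enter \eqref{nsig-up}), using the $\eps$-slack to get a genuine power saving in the $L^\infty$ case $A(\sigma_-)\le\frac{1}{1-\theta}-\eps$, treating the edge region $\sigma\ge 1-\eta_0$ separately with Vinogradov--Korobov plus an explicit Tur\'an-type density bound with controlled constants, accepting exponent losses of size $O(1/J)$, and only letting $J\to\infty$ after the asymptotics in $X$. It also first replaces the $x$-dependent interval length $x^\theta$ by $x/\tau$ with fixed $\tau=X^{1-\theta}$ (via Brun--Titchmarsh on short subdivisions of $[X,2X]$), a technical step your $L^2$ expansion with varying $y=x^\theta$ quietly needs. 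To make your argument rigorous you should adopt this fixed-$J$ discretization and endpoint-based case analysis; as written, Steps~1--3 rely on a uniformity in $\sigma$ that the hypotheses do not provide.
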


We remark that if $\tilde A(\sigma)$ is any upper bound for $A(\sigma)$ that depends continuously\footnote{One can also take $\tilde A$ to be the left-continuous majorant $\tilde A(\sigma) \coloneqq \limsup_{\sigma' \to \sigma} A(\sigma')$, which is the version of $A(\sigma)$ that is used in \cite{tty}.} on $\sigma$, then \eqref{muth} implies the simpler upper bound
\begin{equation}\label{muth-simp}
   \mu(\theta) \leq \sup_{\stackrel{0 \leq \sigma < 1}{\tilde A(\sigma) \geq \frac{1}{1-\theta}}} \tilde \mu_{2,\sigma}(\theta)
\end{equation}
where $\tilde \mu_{2,\sigma}$ is defined as $\mu_{2,\sigma}$ but with $A(\sigma)$ replaced by $\tilde A(\sigma)$.  In practice, the known upper bounds on $A(\sigma)$ are continuous in $\sigma$, so morally speaking one can set $\eps$ to zero in the above formula.  However, it is not currently known if $A(\sigma)$ itself is continuous, so we could not rigorously drop the $\eps$ parameter from the above result.

This theorem already recovers several previously mentioned results:
\begin{itemize}
  \item For $\theta > 1-\frac{1}{A_0}$, the supremum in \eqref{muth} is empty for $\eps$ small enough, and we recover \Cref{folklore}(i).
  \item For $\theta > 1-\frac{2}{A_0}$, the quantity $\mu_{2,\sigma}(\theta)$ is bounded above by $1-c(1-\sigma)$ for some $c>0$, and $A(\sigma)$ is also known (see, e.g., \cite{turan}) to go to zero as $\sigma \to 1$, and so we can also easily recover \Cref{folklore}(ii).
  \item Assuming the Riemann hypothesis, we can restrict attention to the range $0 \leq \sigma \leq 1/2$, and then \eqref{asig-small} readily gives \Cref{baz-bound}(ii).  Unconditionally, the same argument gives the bound
 \begin{equation}\label{muth-12}
   \mu(\theta) \leq \max \left( 1-\theta, \inf_{\eps>0} \sup_{\stackrel{1/2 < \sigma < 1}{A(\sigma) \geq \frac{1}{1-\theta}-\eps}} \mu_{2,\sigma}(\theta) \right).
 \end{equation}
\end{itemize}

However, \Cref{gen-bound} is not quite strong enough to recover the previous bounds in \Cref{baz-bound}(i).  This is because the arguments in that paper also relied on additional bounds on the \emph{additive energy} of zeroes, which were implicitly\footnote{The terminology ``additive energy'' was only introduced several decades later in \cite{tao-vu}.} introduced in \cite{HB-1}, and which we now define.  For $\frac12 \le \sigma < 1$ and $T>0$, let $N^*(\sigma,T)$ be given by
$$N^*(\sigma, T) =  \#\left\{ (\rho_1, \rho_2, \rho_3, \rho_4) : \rho_j = \beta_j + i\gamma_j\in \mathcal{Z}(\sigma, T), |\gamma_1 + \gamma_2 - \gamma_3 - \gamma_4|\le1 \right\}.$$
 For any $\frac{1}{2} \leq \sigma < 1$, let $A^*(\sigma)$ denote the least exponent for which one has the zero density additive energy theorem
$$N^*(\sigma, T) \ll_{\sigma, \eps} T^{A*(\sigma)(1-\sigma) + \eps}$$
for any $\eps>0$, or equivalently that
$$N^*(\sigma, T) \leq T^{A^*(\sigma)(1-\sigma) + o(1)}$$
as $T \to \infty$ (holding $\sigma$ fixed).  For $0 \leq \sigma \leq 1/2$, it is not difficult to use the Riemann--von Mangoldt formula and the functional equation to obtain the bounds
$$ N^*(\sigma,T) \asymp T^3\log^4 T$$
and hence
$$ A^*(\sigma) = \frac{3}{1-\sigma}.$$
For $1/2 < \sigma\leq 1$, the known bounds on $A^*(\sigma)$ are summarized in \Cref{zero-density-energy-table} and \Cref{fig:astar_sigma}.  One always has the trivial bound $A^*(\sigma) \leq 3 A(\sigma)$, but in many ranges of $\sigma$, stronger bounds are known.

\begin{figure}
  \centering
  \includegraphics[width=0.8\textwidth]{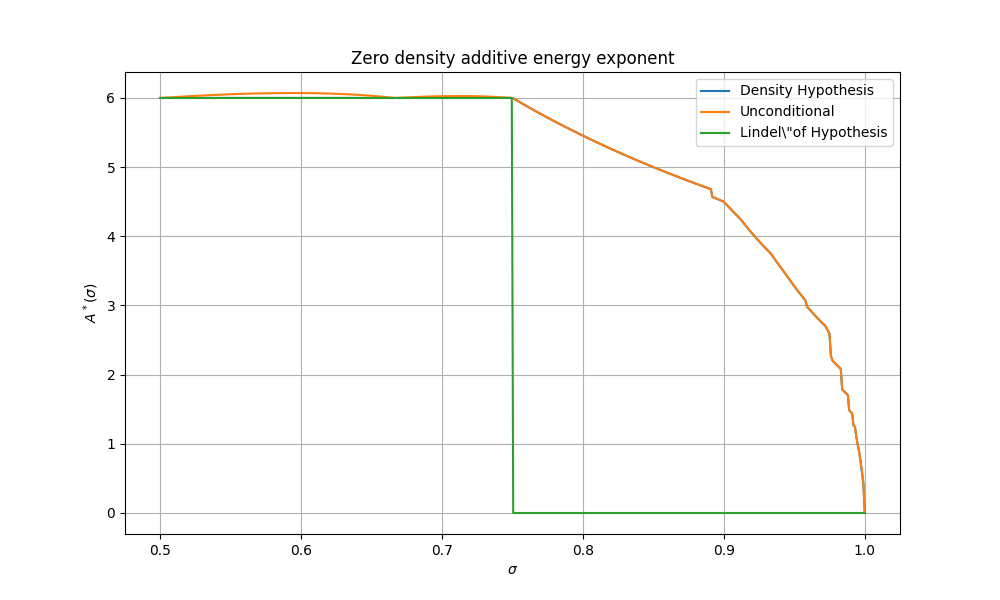}
  \caption{Current best bounds on $A^*(\sigma)$ unconditionally; assuming DH; and assuming LH.  The DH bound is the minimum of the unconditional bound and $6$, and is obscured by the other two graphs in this plot.}\label{fig:astar_sigma}
  \end{figure}

\begin{table}[ht]
    \def\arraystretch{1.2}
    \centering
\small
    \begin{tabular}{|c|c|c|}
    \hline
    $A^*(\sigma)$ bound & Range & Reference\\
    \hline
    $\dfrac{10 - 11\sigma}{(2 - \sigma)(1 - \sigma)}$ & $\dfrac{1}{2} \leq \sigma \le \dfrac{2}{3} = 0.6666\ldots$ & Heath-Brown \cite{HB-2}\\
    \hline
    $\dfrac{18 - 19\sigma}{(4 - 2\sigma)(1 - \sigma)}$ & $\dfrac{2}{3} \leq \sigma \le \dfrac{7}{10} = 0.7$ & Heath-Brown \cite{HB-2}\\
    \hline
    $\dfrac{5(18 - 19\sigma)}{2(5\sigma + 3)(1 - \sigma)}$ & $\dfrac{7}{10} \leq \sigma \le \dfrac{539 - \sqrt{42121}}{460} = 0.7255\ldots$ & Tao--Trudgian--Yang \cite{tty}\\
    \hline
    $\dfrac{2(45 - 44\sigma)}{(2\sigma + 15)(1 - \sigma)}$ & $\dfrac{539 - \sqrt{42121}}{460} \leq \sigma \le \dfrac{165}{226} = 0.7300\ldots$ & Tao--Trudgian--Yang \cite{tty}\\
    \hline
    $\dfrac{457 - 546\sigma}{2(61 - 58\sigma)(1-\sigma)}$ & $\dfrac{165}{226} \leq \sigma \le \dfrac{5831 + \sqrt{60001}}{8240} = 0.7373\ldots$ & Tao--Trudgian--Yang \cite{tty}\\
    \hline
    $\dfrac{5(18 - 19\sigma)}{2(5\sigma + 3)(1 - \sigma)}$ & $\dfrac{5831 + \sqrt{60001}}{8240} \leq \sigma \le \dfrac{42}{55} = 0.7636\ldots$ & Tao--Trudgian--Yang \cite{tty}\\
    \hline
    $\dfrac{18 - 19\sigma}{6(15\sigma - 11)(1- \sigma)}$ & $\dfrac{42}{55} \leq \sigma \le \dfrac{97}{127} = 0.7637\ldots$ & Tao--Trudgian--Yang \cite{tty}\\
    \hline
    $\dfrac{3(18-19\sigma)}{4(4\sigma-1)(1 - \sigma)}$ & $\dfrac{97}{127} \leq \sigma \le \dfrac{79}{103} = 0.7669\ldots$ & Tao--Trudgian--Yang \cite{tty}\\
    \hline
    $\dfrac{18 - 19\sigma}{2(37\sigma - 27)(1 - \sigma)}$ & $\dfrac{79}{103} \leq \sigma \le \dfrac{33}{43} = 0.7674\ldots$ & Tao--Trudgian--Yang \cite{tty}\\
    \hline
    $\dfrac{5(18 - 19\sigma)}{2(13\sigma - 3)(1 - \sigma)}$ & $\dfrac{33}{43} \leq \sigma \le \dfrac{84}{109} = 0.7706\ldots$ & Tao--Trudgian--Yang \cite{tty}\\
    \hline
    $\dfrac{18 - 19\sigma}{9(3\sigma - 2)(1 - \sigma)}$ & $\dfrac{84}{109} \leq \sigma \le \dfrac{1273 - \sqrt{128689}}{1184} = 0.7721\ldots$ & Tao--Trudgian--Yang \cite{tty}\\
    \hline
    $\dfrac{4(10 - 9\sigma)}{5(4\sigma - 1)(1 - \sigma)}$ & $\dfrac{1273 - \sqrt{128689}}{1184} \leq \sigma \le \dfrac{5}{6} = 0.8333\ldots$ & Tao--Trudgian--Yang \cite{tty}\\
    \hline
    $\dfrac{12}{4\sigma - 1}$ & $\dfrac{5}{6} \leq \sigma \le 1$ & Heath-Brown \cite{HB-2}\\
    \hline
    \end{tabular}
 
    \caption{Current best upper bounds on $A^*(\sigma)$, taken from the ANTEDB \cite{antedb}.}   \label{zero-density-energy-table}
\end{table}

We then have the following refinement of \Cref{gen-bound}:

\begin{theorem}[Refined bound]\label{refined-bound}  For any $0 < \theta < 1$, one has
$$ \mu(\theta) \leq \inf_{\eps>0} \sup_{\stackrel{0 \leq \sigma < 1}{A(\sigma) \geq \frac{1}{1-\theta}-\eps}} \min( \mu_{2,\sigma}(\theta), \mu_{4,\sigma}(\theta))$$
where
$$ \mu_{4,\sigma}(\theta) \coloneqq (1-\theta)(1-\sigma) A^*(\sigma) + 4\sigma-3.$$
\end{theorem}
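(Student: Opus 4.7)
The plan is to extend the proof of Theorem \ref{gen-bound}, which controls $|\mathcal{E}_\delta(X,\theta)|$ via an $L^2$ moment estimate and produces the $\mu_{2,\sigma}$ bound, by supplementing it with an $L^4$ moment estimate that introduces the additive energy count $N^*(\sigma,T)$ and hence the $\mu_{4,\sigma}$ bound. The starting point is the truncated explicit formula
$$\psi(x+y)-\psi(x)-y = -S(x) + O\!\left(\frac{X \log^2 X}{T}\right), \qquad S(x) \coloneqq \sum_{\rho \in \mathcal{Z}(0,T)} \frac{(x+y)^\rho - x^\rho}{\rho},$$
valid for $x \in [X,2X]$. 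Choosing $T \asymp X^{1-\theta}$ up to logarithmic factors makes the error $o(\delta y)$, so on $\mathcal{E}_\delta(X,\theta)$ we have $|S(x)| \gg \delta y$ and Chebyshev's inequality gives
$$|\mathcal{E}_\delta(X,\theta)| \cdot (\delta y)^4 \ll \int_X^{2X} |S(x)|^4\, dx.$$

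Expanding the right-hand side yields a sum over quadruples $(\rho_1,\rho_2,\rho_3,\rho_4)$ of zeros with integrand oscillating like $x^{i(\gamma_1+\gamma_2-\gamma_3-\gamma_4)}$ and amplitude $\ll X^{\beta_1+\beta_2+\beta_3+\beta_4-4} y^4$, using the pointwise bound $|((x+y)^\rho-x^\rho)/\rho| \ll \min(yX^{\beta-1}, X^\beta/|\gamma|)$. Integrating against a smooth majorant of the indicator of $[X,2X]$ whose Fourier transform decays rapidly on scale $1/X$ effectively restricts the sum to quadruples with $|\gamma_1+\gamma_2-\gamma_3-\gamma_4| \leq 1$, which by definition are counted by $N^*(\sigma,T)$ after a dyadic decomposition in $\beta \geq \sigma$. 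This gives
$$\int_X^{2X} |S(x)|^4\, dx \ll X^{4\sigma-3} y^4 \cdot N^*(\sigma,T) \cdot X^{o(1)},$$
and substituting $N^*(\sigma,T) \leq T^{A^*(\sigma)(1-\sigma)+o(1)}$ with $T = X^{1-\theta+o(1)}$ yields $|\mathcal{E}_\delta(X,\theta)| \ll X^{\mu_{4,\sigma}(\theta)+o(1)}$. Combining with the $L^2$-bound $|\mathcal{E}_\delta(X,\theta)| \ll X^{\mu_{2,\sigma}(\theta)+o(1)}$ from Theorem \ref{gen-bound}, one chooses for each dyadic strip the smaller of the two exponents, producing $\min(\mu_{2,\sigma},\mu_{4,\sigma})$ inside the supremum. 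The restriction to $\sigma$ with $A(\sigma) \geq 1/(1-\theta)-\eps$ arises as in Theorem \ref{gen-bound}: zeros at $\beta \geq \sigma$ for which $A(\sigma)(1-\sigma) < 1$ contribute only $\ll X^{1-c}$ to $S(x)$ uniformly and hence no exceptional points, with an $\eps$-slack needed because continuity of $A(\sigma)$ is not known.

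The main technical point I expect to be delicate is the $L^4$ integral's reduction to a near-diagonal quadruple count. Using a smoothed indicator of $[X,2X]$ handles it cleanly: the rapid Fourier decay of the cutoff on scale $1/X$ suppresses all quadruples with $|\gamma_1+\gamma_2-\gamma_3-\gamma_4| \gg X^{-1+o(1)}$, after which the remaining count is trivially dominated by $N^*(\sigma,T)$. A direct (unsmoothed) approach would instead require a dyadic decomposition in $|\gamma_1+\gamma_2-\gamma_3-\gamma_4|$ combined with a shifted version of the additive energy estimate, which is doable but cumbersome. Beyond this, the proof reduces to routine dyadic bookkeeping over $\beta$ and $|\gamma|$ and the optimization of $T$ to balance the explicit formula error against the moment savings.
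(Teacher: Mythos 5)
Your strategy coincides with the paper's: truncate the explicit formula at $T=X^{1-\theta+o(1)}$, split the zeros into finitely many strips in $\Re\rho$, remove strips with $A(\sigma)\leq \frac{1}{1-\theta}-\eps$ by an $L^\infty$ bound, and on the remaining strips apply Markov's inequality to a second or fourth moment, the fourth moment reducing to $N^*(\sigma,T)$ and giving $\mu_{4,\sigma}$, with the better of the two chosen per strip. However, two steps are not correct as you state them. For the $L^4$ reduction: a smooth cutoff in $x$ (equivalently a bump $\psi(\log x-\log X)$, as in \Cref{l4-bound}) produces a Fourier factor decaying rapidly in $|\gamma_1+\gamma_2-\gamma_3-\gamma_4|$ on scale $O(1)$, not on scale $1/X$, so quadruples with $1\ll|\gamma_1+\gamma_2-\gamma_3-\gamma_4|\ll T$ are only polynomially damped, not suppressed, and cannot simply be discarded before quoting $N^*$. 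One must still show that the weighted count $\sum(1+|\gamma_1+\gamma_2-\gamma_3-\gamma_4|)^{-10}$ is $\ll N^*(\sigma,T)$ up to logarithms, e.g.\ by double counting with $F(t)=\#\{(\rho_1,\rho_2):|\gamma_1+\gamma_2-t|\leq \tfrac12\}$ and Schur's test or Cauchy--Schwarz; this is exactly the ``shifted energy'' step you relegate to the unsmoothed variant, and the smoothing does not remove it. Your claim that the cutoff kills all quadruples with $|\gamma_1+\gamma_2-\gamma_3-\gamma_4|\gg X^{-1+o(1)}$ is false, though the final bound you assert for $\int_X^{2X}|S(x)|^4\,dx$ is the correct one once this step is supplied.

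The second and more substantive gap is at the right edge $\sigma\to 1$. Your account of how the restriction $A(\sigma)\geq\frac{1}{1-\theta}-\eps$ arises (``zeros with $A(\sigma)(1-\sigma)<1$ contribute $\ll X^{1-c}$ uniformly'') misstates the threshold — the relevant condition is $A(\sigma)\leq\frac{1}{1-\theta}-\eps$, and the contribution must be shown to be $o(\delta y)=o(X^\theta)$ — and, more importantly, this $L^\infty$ disposal does not close on the strips abutting $\sigma=1$: on a strip $[\sigma_-,\sigma_-+1/J)$ the saving is only $X^{-(1-\sigma_-)(1-\theta)\eps}$ against a discretization loss $X^{1/J}$, which degenerates as $\sigma_-\to 1$ no matter how large $J$ is taken. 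The paper therefore treats $\sigma\geq 1-\eta_0$ by a separate argument (\Cref{right-edge}), combining the Vinogradov--Korobov zero-free region with a Tur\'an/Ford-type bound $N(1-\eta,T)\ll X^{C\eta^{3/2}}\log^{O(1)}X$ to show $S_I$ is genuinely $o(X/\tau)$ there. Some input of this kind near $\sigma=1$ is indispensable (neither the $L^2$ nor the $L^4$ moment helps there, since $\mu_{2,\sigma},\mu_{4,\sigma}\to 1$), so as written your case analysis has a hole in exactly that range; the rest of the argument is routine and matches the paper's proof.
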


As in \eqref{muth-12}, we can also bound
$$ \mu(\theta) \leq \max\left(1-\theta, \inf_{\eps>0} \sup_{\stackrel{1/2 < \sigma < 1}{A(\sigma) \geq \frac{1}{1-\theta}-\eps}} \min( \mu_{2,\sigma}(\theta), \mu_{4,\sigma}(\theta)) \right).$$

We establish \Cref{refined-bound} in \Cref{proof-sec}, following the strategy of previous works (particularly \cite{HB-2}, \cite{baz-2}) of applying the explicit formula for $\sum_{n \leq x} \Lambda(n)$, followed by $L^\infty$, $L^2$, and $L^4$ estimates for various components of that formula.

In \Cref{numerics-sec} we report on the bounds obtained by combining \Cref{refined-bound} with the estimates in \Cref{zero_density_table} and \Cref{zero-density-energy-table}.

\subsection{Acknowledgments}

AG is supported by NSF grant OIA-2229278. TT is supported by NSF grant DMS-2347850. The authors thank Kaisa Matom\"aki for providing several relevant references.

\subsection{Notation}\label{notation-sec}

We use the following asymptotic notation throughout the paper:

\begin{itemize}
\item We use $X \ll Y$, $Y \gg X$, or $X = O(Y)$ to denote the estimate $|X| \leq C Y$ for some constant $C>0$.  If we need this constant $C$ to depend on parameters, we will indicate this by subscripts, thus for instance $X \ll_\theta Y$ denotes the bound $|X| \leq C_\theta Y$ for some $C_\theta$ depending on $Y$.  We use $X \asymp Y$ to denote the estimate $X \ll Y \ll X$.
\item We use $Y = o(Z)$ as $X \to \infty$ to denote the estimate $|Y| \leq c(X) Z$ for some function $c(X)$ of a parameter $X$ that goes to zero as $X \to \infty$ (holding all other parameters fixed).  We use $Y \sim Z$ to denote the estimate $Y = (1+o(1)) Z$ as $X \to \infty$.
\end{itemize}

\section{Proof of bound}\label{proof-sec}

In this section we prove \Cref{refined-bound}, which of course implies \Cref{gen-bound} as a corollary.

We begin with some simple reductions.
Fix $0 <\theta < 1$ and $\eps$, and let $\mu$ denote the quantity
$$ \mu \coloneqq \sup_{\stackrel{0 \leq \sigma < 1}{A(\sigma) \geq \frac{1}{1-\theta}-\eps}} \min( \mu_{2,\sigma}(\theta), \mu_{4,\sigma}(\theta)),$$
thus
whenever $0 \leq \sigma < 1$ is such that $A(\sigma) \geq \frac{1}{1-\theta}-\eps$, at least one of the inequalities
\begin{equation}\label{muth-alt}
(1-\theta)(1-\sigma) A(\sigma) + 2\sigma - 1 \leq \mu
\end{equation}
and
\begin{equation}\label{muth-alt-2}
(1-\theta)(1-\sigma) A^*(\sigma) + 4\sigma - 3 \leq \mu
\end{equation}
hold. Our task is then to show that $\mu(\theta) \leq \mu$.

Fix $0 < \delta < 1$.  It will suffice to show that whenever $J$ is a natural number that is sufficiently large (depending on $\delta, \theta, \eps$), that one has the upper bound
$$|{\mathcal E}_\delta(X,\theta)| \ll_{\delta,\theta,J} X^{\mu+4/J+o(1)}$$
as $X \to \infty$ (holding the other parameters $J,\delta,\theta,\eps$ fixed).  We may of course assume that $X$ is larger than any specified absolute constant.

By subdividing $[X,2X]$ into $O_\delta(1)$ intervals of the form $[X,(1+\delta/J) X]$ and using the triangle inequality, it suffices to show that
$$\left|\left\{ X \leq x < (1+\delta/J) X: \left| \sum_{x < n \leq x + x^\theta} \Lambda(n) - x^\theta \right| \geq \delta x^\theta \right\}\right| \ll_{\delta,\theta,J} X^{\mu+4/J+o(1)}.$$
From the Brun--Titchmarsh inequality we see that for $X \leq x < (1+\delta/J) X$, one has
$$ \sum_{x < n \leq x + x^\theta} \Lambda(n) = \sum_{x < n \leq x + x/\tau} \Lambda(n) + O\left(\frac{\delta}{J} x^\theta\right)$$
where $\tau \coloneqq X^{1-\theta}$, so in particular $x/\tau = x^\theta +  O(\frac{\delta}{J} x^\theta)$.  Thus, by the triangle inequality, it will suffice (after choosing $J$ large enough) to show that
\[
\left| \left\{ X \leq x < (1+\delta/J) X : \left| \sum_{x < n \leq x + x/\tau} \Lambda(n) - \frac{x}{\tau} \right| \geq \frac{\delta}{2} \frac{X}{\tau} \right\} \right| \ll_{\delta,\theta,J,\eps} X^{\mu+4/J+o(1)}.
\]

We set $T \coloneqq J (\log^2 X) \tau = X^{1-\theta+o(1)}$.  By the explicit formula (see, e.g., \cite[Chapter 17]{davenport}) we have
$$ \sum_{x < n \leq x + x/\tau} \Lambda(n) - \frac{x}{\tau} = \sum_{\rho \in {\mathcal Z}(0,T)} \frac{(x+x/\tau)^\rho-x^\rho}{\rho} + O\left( \frac{x (\log x)^2}{T} \right).$$
By choice of $T$, the error term here is $O(\frac{1}{J} X^\theta)$.  Thus, for $J$ large enough, it suffices to show that
\begin{equation}\label{targ}
  \left|\left\{ X \leq x < (1+\delta/J) X: |S_{[0,1]}(x)| \geq \frac{\delta}{3} \frac{X}{\tau} \right\}\right| \ll_{\delta,\theta,J,\eps} X^{\mu+4/J+o(1)}
\end{equation}
where, for any subset $I$ of $[0,1]$ and any $x > 1$, we write
\begin{equation}\label{six}
   S_I(x) \coloneqq \sum_{\rho \in {\mathcal Z}(0,T): \Re \rho \in I} \frac{(x+x/\tau)^\rho-x^\rho}{\rho}.
\end{equation}

We first dispose of the region near the right edge $\sigma=1$ of the critical line.

\begin{lemma}[Contribution near the right edge]\label{right-edge}  If $\eta_0>0$ is sufficiently small depending on $\theta$, and $I \subset [1-\eta_0,1]$, then
  \begin{equation}\label{si-bound}
\sup_{X \leq x \leq 2X}|S_I(x)| \ll_\theta \exp\left(-c_\theta \frac{\log^{1/3} X}{\log\log^{1/3} X} \right)\frac{X}{\tau}
  \end{equation}
  for some $c_\theta>0$ depending only on $\theta$.
\end{lemma}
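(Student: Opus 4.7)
The plan is to control each summand of $S_I(x)$ pointwise and then sum using two classical ingredients: the Vinogradov--Korobov zero-free region and a strong zero density estimate near $\sigma=1$. Writing $(x+x/\tau)^\rho - x^\rho = \rho \int_x^{x+x/\tau} u^{\rho-1}\,du$ and noting that $u^{\Re\rho-1}$ is decreasing in $u$ for $\Re\rho \leq 1$, one obtains the pointwise bound
\[
\left|\frac{(x+x/\tau)^\rho - x^\rho}{\rho}\right| \leq \frac{x^{\Re\rho}}{\tau},
\]
so that $|S_I(x)| \leq \tau^{-1} \sum_{\rho \in \mathcal{Z}(0,T):\, \Re\rho \in I} x^{\Re\rho}$.

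Next, the Vinogradov--Korobov zero-free region gives $N(\sigma,T) = 0$ whenever $\sigma \geq 1-\eta_1$, where $\eta_1 \coloneqq c_0/((\log T)^{2/3}(\log\log T)^{1/3})$ for some absolute constant $c_0 > 0$; hence only zeros with $\Re\rho \in [1-\eta_0, 1-\eta_1]$ contribute. In that strip I would apply a classical zero density bound of the shape $N(1-u,T) \ll T^{C_0 u^{3/2}}(\log T)^{B_0}$ for $u$ small, with absolute constants $C_0, B_0 > 0$; this is implicit in Pintz's entries of \Cref{zero_density_table}, where a direct check shows that $A(\sigma)(1-\sigma)$ scales like $(1-\sigma)^{3/2}$ as $\sigma \to 1^-$.

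The final step is Stieltjes integration by parts. With $F(u) \coloneqq N(1-u,T)$ (so that $F(\eta_1)=0$), the sum above equals $x^{1-\eta_0} F(\eta_0) + (\log x) \int_{\eta_1}^{\eta_0} x^{1-u} F(u)\,du$. Using $\log T = (1-\theta+o(1))\log X$ and the density estimate, the integrand is at most $X^{1-u+C_0 u^{3/2}(1-\theta)}(\log X)^{O(1)}$. Provided $\eta_0$ is taken small enough (depending on $\theta$) that $C_0 \eta_0^{1/2}(1-\theta) \leq 1/2$, the exponent is strictly decreasing in $u$ on $[\eta_1,\eta_0]$, so the integrand peaks at $u = \eta_1$ and the integral is $\ll X \exp(-\tfrac{1}{2}\eta_1 \log X)$. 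Since $\eta_1 \log X \asymp_\theta (\log X)^{1/3}/(\log\log X)^{1/3}$ and the boundary term $\ll X^{1-\eta_0/2}$ is much smaller, dividing by $\tau$ yields the target bound after absorbing polylogarithmic factors into a slightly smaller $c_\theta$.

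The main technical hurdle is verifying the $(1-\sigma)^{3/2}$ shape of the zero density bound uniformly in a small neighborhood of $\sigma=1$ with constants depending only on $\theta$; this is essentially routine from \Cref{zero_density_table} but must be set up so that the choice of $\eta_0$ depends on $\theta$ alone, not on any parameter that degrades with $X$.
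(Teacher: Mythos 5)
Your overall strategy is the same as the paper's: bound each term pointwise by $x^{\Re\rho}/\tau$, discard zeros with $\Re\rho \geq 1-\eta_1$ via the Vinogradov--Korobov zero-free region, and beat the remaining range $[1-\eta_0,1-\eta_1]$ with a zero density bound of the shape $N(1-u,T)\ll T^{Cu^{3/2}}$, choosing $\eta_0$ so small (in terms of $\theta$) that the exponent $1-u+Cu^{3/2}(1-\theta)$ decreases in $u$. The partial-summation mechanics are fine. The genuine gap is in the justification of the key density input: you propose to extract $N(1-u,T)\ll T^{C_0u^{3/2}}(\log T)^{B_0}$ from the Pintz entries of \Cref{zero_density_table}, i.e.\ from the exponents $A(\sigma)$. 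But those bounds are of the form $N(\sigma,T)\ll_{\sigma,\varepsilon}T^{A(\sigma)(1-\sigma)+\varepsilon}$: they carry a $T^{\varepsilon}$ loss with an implied constant depending on the fixed pair $(\sigma,\varepsilon)$, and they are not uniform as $\sigma\to 1$. In the regime that matters here, $u$ can be as small as $\eta_1\asymp (\log X)^{-2/3}(\log\log X)^{-1/3}$, so the saving you are fighting for is only $X^{-u/2}=\exp\bigl(-c(\log X)^{1/3}(\log\log X)^{-1/3}\bigr)$; a factor $T^{\varepsilon}=X^{\varepsilon(1-\theta)+o(1)}$ for any fixed $\varepsilon>0$ obliterates this, and you cannot let $\varepsilon\to 0$ with $X$ because the implied constants are uncontrolled. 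So the step you label ``essentially routine from \Cref{zero_density_table}'' is exactly the step that fails; the paper flags this in a footnote, noting that the Pintz-type $T^{\varepsilon}$ losses are unacceptable in this portion of the argument.

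The fix is to invoke a density estimate near $\sigma=1$ with only polylogarithmic losses and uniform in the strip, namely Tur\'an's bound $N(1-\eta,T)\ll T^{C\eta^{3/2}}\log^{O(1)}T$ (with $C=12000$, improved to $C=58.05$ by Ford); this is the input the paper uses. With that substitution, your argument goes through: for $\eta_0$ small enough depending on $\theta$ one gets $N(1-\eta,T)X^{-\eta}\ll X^{-\eta/2}\log^{O(1)}X$ uniformly in $\eta_1\leq\eta\leq\eta_0$, and the zero-free region then yields the stated $\exp\bigl(-c_\theta\log^{1/3}X/\log\log^{1/3}X\bigr)$ saving after absorbing the logarithms.
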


\begin{proof} This bound is essentially in \cite[\S 3]{HB-2} (with the explicit choice $\eta_0 = 10^{-7}$), but we give a proof here for the convenience of the reader.
  From the fundamental theorem of calculus and the triangle inequality, we have
\begin{align*}
\sup_{X \leq x \leq 2X}|S_I(x)| &\ll \sum_{\sigma+it \in {\mathcal Z}(1-\eta_0,T)} \frac{X}{\tau} X^{\sigma-1} \\
&\ll \frac{X}{\tau} \sum_{\sigma+it \in {\mathcal Z}(1-\eta_0,T)} X^{-\eta_0} + \int_{1-\sigma}^{\eta_0} (\log X) X^{-\eta}\ d\eta \\
&\ll \frac{X \log X}{\tau} \left( N(1-\eta_0,T) X^{-\eta_0} + \int_{0}^{\eta_0} N(1-\eta,T) X^{-\eta}\ d\eta \right)\\
&\ll \frac{X \log X}{\tau} \sup_{0 < \eta \leq \eta_0} N(1-\eta,T) X^{-\eta}.
\end{align*}
From the Vinogradov--Korobov zero-free region, we see that $N(1-\eta,T)$ vanishes unless
\begin{equation}\label{eta-vanish}
  \eta \gg_\theta \frac{1}{\log^{2/3} X \log\log^{1/3} X}.
\end{equation}
We also have the upper bound
$$ N(1-\eta,T) \ll T^{C \eta^{3/2}} \log^{O(1)} T \ll X^{C \sigma \eta^{3/2}} \log^{O(1)} X$$
for some absolute constant $C>0$; such a bound was first obtained (with $C = 12000$) in \cite[Theorem 38.2]{turan}.  The value of $C$ has improved substantially, for instance\footnote{In principle, one should be able to lower $C$ to $3\sqrt{2}$ to match the results in \cite{pintz}, but the zero density theorems there contain $T^\eps$ type losses which are unacceptable for this portion of the argument. In fact, the bounds in \cite{pintz} can be used to show that one can take $\eta_0$ to be any quantity between $0$ and $1/24$; we leave this calculation to the interested reader.} to $C=58.05$ in \cite{ford}, but the precise value of $C$ is not of importance for our purposes.  For $\eta_0$ small enough depending on $\sigma$, this then gives
$$ N(1-\eta,T) X^{-\eta} \ll X^{-\eta/2} \log^{O(1)} X$$
which, when combined with the vanishing of this expression outside of the region \eqref{eta-vanish}, gives \eqref{si-bound}.
\end{proof}

A similar argument controls intervals $I$ away from the right edge:

\begin{lemma}[$L^\infty$ bound]\label{linfty-bound}  If $I \subset [\sigma_-, \sigma_+]$ for some fixed $0 \leq \sigma_- \leq \sigma_+ \leq 1$, then
$$ \sup_{X \leq x \leq 2X} |S_I(x)| \ll_\theta X^{(1-\theta)(1-\sigma_-) A(\sigma_-) + \theta + \sigma_+ - 1 + o(1)}$$
as $X \to \infty$.
\end{lemma}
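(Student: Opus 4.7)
The plan is to combine a pointwise $L^\infty$ estimate on each term in \eqref{six} with the zero density bound \eqref{nsig-up} applied at the left endpoint $\sigma_-$.

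First I would write, for any zero $\rho = \beta+i\gamma \in \mathcal{Z}(0,T)$,
$$\frac{(x+x/\tau)^\rho - x^\rho}{\rho} = \int_x^{x+x/\tau} t^{\rho-1}\, dt,$$
and apply the triangle inequality to obtain the pointwise bound
$$\left|\frac{(x+x/\tau)^\rho - x^\rho}{\rho}\right| \leq \frac{x}{\tau}(2X)^{\beta-1} \ll \frac{X^\beta}{\tau},$$
uniformly for $X \leq x \leq 2X$. Since every zero contributing to $S_I(x)$ satisfies $\beta \leq \sigma_+$ and $\beta \geq \sigma_-$, a second triangle inequality over all zeros in the strip gives
$$|S_I(x)| \ll \frac{X^{\sigma_+}}{\tau}\, N(\sigma_-, T).$$

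Then I would invoke \eqref{nsig-up} with $T = X^{1-\theta+o(1)}$ together with $\tau = X^{1-\theta}$ to conclude
$$|S_I(x)| \ll X^{\sigma_+ - (1-\theta)} \cdot X^{(1-\theta)(1-\sigma_-)A(\sigma_-) + o(1)} = X^{(1-\theta)(1-\sigma_-)A(\sigma_-) + \sigma_+ + \theta - 1 + o(1)},$$
which is the desired bound.

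I do not foresee any genuine obstacle; this is a direct textbook-style estimate, and in particular none of the refinements (Vinogradov--Korobov zero-free region, $L^2$ or $L^4$ averaging) used in the preceding lemma are needed here. The only conceptual point worth flagging is that the bound treats the two endpoints of $I$ asymmetrically: $\sigma_+$ controls the pointwise size $X^\beta$ while $\sigma_-$ controls the zero count through $N(\sigma_-,T)$. Consequently, when this lemma is applied in \Cref{proof-sec}, one will presumably need to dissect $[0,1]$ into many thin subintervals $I$ so that $\sigma_+ - \sigma_-$ is negligible, after which the optimization over $\sigma$ in the definition of $\mu_{2,\sigma}(\theta)$ can be carried out cleanly.
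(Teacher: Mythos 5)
Your argument is correct and is essentially the paper's own proof: both bound each term of $S_I(x)$ via the fundamental theorem of calculus by $O(X^{\sigma_+}/\tau)$, bound the number of contributing zeros by $N(\sigma_-,T)$, and then invoke \eqref{nsig-up} together with $T, \tau = X^{1-\theta+o(1)}$ (the paper merely routes through the partial-summation display from the preceding lemma before making the same crude endpoint bounds). One cosmetic slip: since $t^{\beta-1}$ is decreasing for $\beta\leq 1$, the pointwise bound should be $\frac{x}{\tau}\,x^{\beta-1}\ll X^{\beta}/\tau$ rather than $\frac{x}{\tau}(2X)^{\beta-1}$, but this changes nothing in the conclusion.
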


\begin{proof}
By repeating the arguments used to prove \Cref{right-edge}, we have
$$ S_I(x) \ll \frac{X \log X}{\tau} \sup_{\sigma_- \leq \sigma \leq \sigma_+} N(\sigma,T) X^{\sigma-1}.$$
If we crudely bound $N(\sigma,T) \leq N(\sigma_-,T)$ and $X^{\sigma-1} \leq X^{\sigma_+ - 1}$, we obtain
$$ S_I(x) \ll \frac{X \log X}{\tau} N(\sigma_-,T) X^{\sigma_+ - 1}.$$
The claim now follows from \eqref{nsig-up} and the definitions of $T,\tau$.
\end{proof}

We also have a second moment bound:

\begin{lemma}[$L^2$ bound]\label{l2-bound}  If $I \subset [\sigma_-, \sigma_+]$ for some fixed $0 \leq \sigma_- \leq \sigma_+ \leq 1$, then
$$ \frac{1}{X}\int_X^{2X} |S_I(x)|^2\ dx \ll_\theta X^{(1-\theta) (1-\sigma_-) A(\sigma_-) + 2\theta + 2\sigma_+ - 2 + o(1)}$$
\end{lemma}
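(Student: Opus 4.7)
The plan is to expand $|S_I(x)|^2$ as a double sum over pairs of zeros and integrate term-by-term. First I would rewrite each summand in \eqref{six} as $x^\rho g(\rho)$, where
$$ g(\rho) \coloneqq \frac{(1+1/\tau)^\rho - 1}{\rho}.$$
A routine case split into $|\rho| \leq \tau$ (using $e^w - 1 = O(|w|)$ for bounded $w$ and $\log(1+1/\tau) = 1/\tau + O(1/\tau^2)$) and $|\rho| > \tau$ (using the trivial bound $|(1+1/\tau)^\rho| = (1+1/\tau)^{\Re \rho} \ll 1$, so that the numerator is $O(1)$ while the denominator exceeds $\tau$) yields the uniform pointwise estimate $|g(\rho)| \ll 1/\tau$ for all $\rho$ with $0 \leq \Re \rho \leq 1$.

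With $S_I(x) = \sum_\rho g(\rho) x^\rho$, expanding the square produces
$$\int_X^{2X} |S_I(x)|^2\, dx = \sum_{\rho_1, \rho_2} g(\rho_1) \overline{g(\rho_2)} \int_X^{2X} x^{\sigma_1 + \sigma_2 + i(\gamma_1 - \gamma_2)}\, dx,$$
where $\rho_j = \sigma_j + i\gamma_j$ range over ${\mathcal Z}(0,T)$ with $\Re \rho_j \in I$. Evaluating the inner integral in closed form and taking absolute values gives the bound $\ll X^{\sigma_1 + \sigma_2 + 1}/(1 + |\gamma_1 - \gamma_2|)$. Combining with $|g(\rho_j)| \ll 1/\tau$ and $\sigma_j \leq \sigma_+$ reduces matters to estimating
$$\frac{X^{2\sigma_+ + 1}}{\tau^2} \sum_{\rho_1, \rho_2} \frac{1}{1 + |\gamma_1 - \gamma_2|}.$$

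For the remaining double sum I would fix $\rho_1$ and group $\rho_2$ according to the integer part of $|\gamma_1 - \gamma_2|$; the classical Riemann--von Mangoldt bound that any height-$1$ window in $[-T,T]$ contains $O(\log T)$ zeros of $\zeta$ then gives an inner sum of $\ll \log^2 T$, and the outer sum over $\rho_1$ contributes a further factor of $N(\sigma_-, T) \ll T^{A(\sigma_-)(1-\sigma_-) + o(1)}$ by \eqref{nsig-up}. Substituting $\tau = X^{1-\theta}$ and $T = X^{1-\theta + o(1)}$, dividing by $X$, and absorbing all logarithmic factors into $X^{o(1)}$ recovers the claimed exponent $(1-\theta)(1-\sigma_-)A(\sigma_-) + 2\theta + 2\sigma_+ - 2$. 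No step looks genuinely hard: the only moderately subtle point is the uniform estimate $|g(\rho)| \ll 1/\tau$, and everything downstream is just orthogonality combined with classical zero-counting, without requiring any proper $L^2$ mean value theorem for Dirichlet polynomials.
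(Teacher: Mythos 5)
Your proof is correct and follows essentially the same route as the paper: expand $|S_I(x)|^2$ into a double sum over pairs of zeros, bound each coefficient by $O(1/\tau)$, exploit off-diagonal decay in $|\gamma_1-\gamma_2|$, and conclude with the Riemann--von Mangoldt count of $O(\log T)$ zeros per unit height together with $N(\sigma_-,T) \leq T^{A(\sigma_-)(1-\sigma_-)+o(1)}$. The only difference is cosmetic: the paper inserts a smooth weight $\psi(\log x - \log X)$ so that the off-diagonal kernel decays like $(1+|t-t'|)^{-10}$, whereas you integrate the sharp cutoff directly and settle for decay $(1+|\gamma_1-\gamma_2|)^{-1}$, which costs one extra factor of $\log T$ that is harmlessly absorbed into $X^{o(1)}$.
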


\begin{proof}  This estimate is essentially in \cite[\S 2]{HB-2}, but for the convenience of the reader we give a proof here.  We can bound the left-hand side by
  $$ \ll \int_0^\infty |S_I(x)|^2 \psi(\log x - \log X)\ \frac{dx}{x}$$
  where $\psi$ is fixed non-negative bump function that is bounded away from zero on $[0,\log 2]$.  Using \eqref{six}, we may expand this as
$$ \sum_{\sigma+it, \sigma'+it' \in {\mathcal Z}(0,T): \sigma,\sigma' \in I} c_{\sigma+it} \overline{c}_{\sigma'+it'} X^{\sigma+\sigma'+i(t-t')} \hat \psi( t-t' - i(\sigma+\sigma'))$$
where
$$ \hat \psi(\xi) \coloneqq \int_\R \psi(u) e^{iu\xi}\ du$$
is the (complexified) Fourier transform of $\psi$, and $c_{\sigma+it}$ are the coefficients
$$ c_{\sigma+it} \coloneqq \frac{(1+1/\tau)^{\sigma+it} - 1}{\sigma+it}.$$
By standard integration by parts estimates, we have
$$ \hat \psi( t-t' - i(\sigma+\sigma')) \ll\frac{1}{(1 + |t-t'|)^{10}}$$
(say), while from the fundamental theorem of calculus one has
$$ c_{\sigma+it}, c_{\sigma'+it'} \ll \frac{1}{\tau} = X^{\theta - 1 + o(1)}.$$
By the triangle inequality, we conclude that
$$ \frac{1}{X}\int_X^{2X} |S_I(x)|^2\ dx
\ll X^{2\theta+ 2\sigma_+-2+o(1)} \sum_{\sigma+it, \sigma'+it' \in {\mathcal Z}(\sigma_-,T)} \frac{1}{(1 + |t-t'|)^{10}}.$$
From the Riemann-von Mangoldt formula, there are there are at most $O(\log T)$ zeroes $\sigma+it \in {\mathcal Z}(\sigma_-,T)$ (counting multiplicity) with $t$ in any given unit interval in $[-T,T]$, so we can bound the above expression by
$$ \ll X^{2\theta+ 2\sigma_+-2+o(1)} (\log T) N(\sigma_-, T),$$
and the claim now follows from \eqref{nsig-up} and the definitions of $T,\tau$.
\end{proof}

In a similar spirit, we have a fourth moment bound:

\begin{lemma}[$L^4$ bound]\label{l4-bound}  If $I \subset [\sigma_-, \sigma_+]$ for some fixed $0 \leq \sigma_- \leq \sigma_+ \leq 1$, then
$$ \frac{1}{X}\int_X^{2X} |S_I(x)|^4\ dx \ll_\theta X^{(1-\theta) (1-\sigma_-) A^*(\sigma_-) + 4\theta + 4\sigma_+ - 4 + o(1)}$$
\end{lemma}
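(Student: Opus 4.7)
The plan is to extend the proof of \Cref{l2-bound}, expanding the fourth power $|S_I(x)|^4 = |S_I(x)|^2 \cdot |S_I(x)|^2$ as a quadruple rather than double sum over zeroes. Following the setup there, bound the left-hand side by $\int_0^\infty |S_I(x)|^4 \psi(\log x - \log X)\, dx/x$ with $\psi$ as before, substitute \eqref{six}, and change variables $u = \log(x/X)$ to obtain an expression of the form
$$ \sum_{\rho_1,\rho_2,\rho_3,\rho_4} c_{\rho_1}\overline{c_{\rho_2}}c_{\rho_3}\overline{c_{\rho_4}}\, X^{\rho_1+\overline{\rho_2}+\rho_3+\overline{\rho_4}}\,\hat\psi\bigl(t_1-t_2+t_3-t_4 - i(\sigma_1+\sigma_2+\sigma_3+\sigma_4)\bigr),$$
where the sum is over $\rho_j \in {\mathcal Z}(\sigma_-, T)$ with $\sigma_j \in I$. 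After a harmless relabeling ($\rho_2 \leftrightarrow \rho_3$), the frequency shift becomes $t_1 + t_2 - t_3 - t_4$, matching the shift in the definition of $N^*$. Using $|c_\rho| \ll X^{\theta - 1 + o(1)}$, $X^{\sigma_1+\sigma_2+\sigma_3+\sigma_4} \leq X^{4\sigma_+}$, and the rapid decay $|\hat\psi(t - i\alpha)| \ll_N (1+|t|)^{-N}$ for bounded $\alpha$, the problem reduces to proving
$$ \Sigma \coloneqq \sum_{\rho_j \in {\mathcal Z}(\sigma_-,T),\,\sigma_j \in I} \frac{1}{(1 + |t_1 + t_2 - t_3 - t_4|)^{10}} \ll N^*(\sigma_-, T) \cdot \log^{O(1)} T.$$

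The main obstacle is this bound on $\Sigma$: the naive approach of fixing three zeroes and summing the fourth via the Riemann--von Mangoldt formula only yields $N(\sigma_-,T)^3 \log^{O(1)} T$, which is too weak to produce $A^*$ in place of $3A$. The sharper bound exploits additive structure as follows. Let $F(s) \coloneqq \#\{(\rho_1,\rho_2) \in {\mathcal Z}(\sigma_-,T)^2 : t_1 + t_2 \in [s, s+1)\}$, and observe that the number $M_n$ of quadruples with $t_1 + t_2 - t_3 - t_4 \in [n, n+1)$ is essentially the shifted convolution $\sum_a F(a+n) F(a)$ (modulo boundary terms of the same shape). By Cauchy--Schwarz this is bounded uniformly in $n$ by $\sum_a F(a)^2$, which in turn counts quadruples with $t_1+t_2$ and $t_3+t_4$ lying in a common unit interval and is therefore majorized by $N^*(\sigma_-, T)$. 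Summing $M_n/(1+|n|)^{10}$ against the rapidly decaying weight yields $\Sigma \ll N^*(\sigma_-, T)$.

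Finally, combine the above pieces: applying the definition of $A^*(\sigma_-)$ together with $T = X^{1-\theta+o(1)}$ gives $N^*(\sigma_-, T) \leq X^{(1-\theta)(1-\sigma_-) A^*(\sigma_-) + o(1)}$, which multiplied by the collected polynomial factor $X^{4\theta + 4\sigma_+ - 4 + o(1)}$ produces the claimed exponent. Apart from the additive-energy bound on $\Sigma$, every step is a straightforward modification of the quadratic case treated in \Cref{l2-bound}.
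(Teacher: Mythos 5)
Your proposal is correct and follows essentially the same route as the paper: the same expansion into a quadruple sum over zeroes with the bounds on $c_\rho$ and $\hat\psi$ from the $L^2$ case, reducing to the weighted sum $\Sigma$. Your discrete shifted-convolution/Cauchy--Schwarz argument bounding $\Sigma$ by $N^*(\sigma_-,T)$ is just a discretized version of the paper's double-counting followed by Schur's test applied to $\int F(t)F(t')\,(1+|t-t'|)^{-10}\,dt\,dt'$, so the two proofs coincide in substance.
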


\begin{proof}  Again, this estimate is essentially in \cite[\S 2]{HB-2}, but we reproduce the argument here.  With $\psi$ as before, we can bound the left-hand side by
\begin{align*}
&\ll \sum_{\sigma_1+it_1, \dots, \sigma_4+it_4 \in {\mathcal Z}(0,T): \sigma_1, \dots, \sigma_4 \in I} c_{\sigma_1+it_1} c_{\sigma_2+it_2} \overline{c}_{\sigma_3+it_3} \\
&\quad X^{\sigma_1+\sigma_2+\sigma_3+\sigma_4+i(t_1+t_2-t_3-t_4)} \hat \psi( t_1+t_2-t_3-t_4 - i(\sigma_1+\sigma_2+\sigma_3+\sigma_4))
\end{align*}
and so by using the estimates used to bound \Cref{l2-bound}, we can bound this expressin in turn by
$$ \ll X^{4\theta+4\sigma_+-4+o(1)} \sum_{\sigma_1+it_1, \dots, \sigma_4+it_4 \in {\mathcal Z}(\sigma_-,T)}
\frac{1}{(1 + |t_1+t_2-t_3-t_4|)^{10}}.$$
By a routine double counting, we can bound this expression by
$$ \ll X^{4\theta+4\sigma_+-4+o(1)} \int_\R \int_\R F(t) F(t') \frac{dt dt'}{(1+|t-t'|)^{10}}$$
where
$$ F(t) \coloneqq \# \left\{ \sigma_1+it_1, \sigma_2+it_2 \in {\mathcal Z}(\sigma_-,T) : |t_1+t_2-t| \leq \frac{1}{2} \right\}.$$
By Schur's test, this is bounded by
$$ \ll X^{4\theta+4\sigma_+-4+o(1)} \int_\R F(t)^2\ dt$$
which by expanding out the integral can be bounded by
$$ \ll X^{4\theta+4\sigma_+-4+o(1)} N^*(\sigma_-, T)$$
and the claim follows from \eqref{nsig-up} and the definitions of $T,\tau$.
\end{proof}

We can now prove \eqref{targ}.  We subdivide the interval $[0,1)$ into $J$ intervals $I$ of the form $[j/J,(j+1)/J)$ for $0 \leq j < J$.  By the pigeonhole principle (and the absence of zeroes on the line $\sigma=1$), if $|S_{[0,1]}(x)| \geq \frac{\delta}{3} \frac{X}{\tau}$, then $|S_I(x)| \geq \frac{\delta}{3J} \frac{X}{\tau}$ for one of the intervals $I$.  Thus, by the triangle inequality, it suffices to show that
\begin{equation}\label{targ-2}
 \left|\left\{ X \leq x < (1+\delta/J) X: |S_I(x)| \geq \frac{\delta}{3J} \frac{X}{\tau} \right\}\right| \ll_{\delta,\theta,J,\eps} X^{\mu+4/J+o(1)}
\end{equation}
for each such interval $I$.

Write $I = [\sigma_-,\sigma_-+1/J)$. By \Cref{right-edge}, the desired claim is true if $\sigma_- \geq 1-\eta_0$ for some sufficiently small constant $\eta_0$ (depending on $\theta$, but independent of $J$), so we may assume that $\sigma_- < 1-\eta_0$.

Suppose first that $A(\sigma_-) \leq \frac{1}{1-\theta}-\eps$.  Then we apply \Cref{linfty-bound} to conclude that
$$
\sup_{X \leq x \leq 2X}|S_I(x)|
\ll_\theta X^{- (1-\sigma_-) \eps + \theta + \frac{1}{J} + o(1)} \ll_\theta X^{-\eta_0 \eps/2 + o(1)} \frac{X}{\tau}$$
if $J$ is large enough.  Thus, for $X$ large enough, the left-hand side of \eqref{targ-2} vanishes.

It remains to consider the case when
$$ A(\sigma_-) > \frac{1}{1-\theta}-\eps.$$
By definition of $\mu$, at least one of \eqref{muth-alt}, \eqref{muth-alt-2} hold.  Suppose first that \eqref{muth-alt} holds. From \Cref{l2-bound} we have
$$ \frac{1}{X}\int_X^{2X} |S_I(x)|^2\ dx \ll_\theta X^{2\theta + \mu - 1 + 2/J + o(1)}$$
and the desired bound \eqref{targ-2} follows from the Markov inequality since $\frac{\delta}{3J} \frac{X}{\tau} =X^{2\theta+o(1)}$.
Similarly, if \eqref{muth-alt-2} holds, we have
$$ \frac{1}{X}\int_X^{2X} |S_I(x)|^4\ dx \ll_\theta X^{4\theta + \mu - 2 + 4/J + o(1)}$$
and the desired bound \eqref{targ-2} again follows from Markov's inequality as before.

\begin{remark*}
By a routine modification of the above arguments one can replace $\min(\mu_{2,\theta}, \mu_{4,\theta})$ by $\min_{1 \leq k \leq K} \mu_{2k,\theta}$ for any fixed $K$, where
$$ \mu_{2k,\sigma}(\theta) \coloneqq (1-\theta)(1-\sigma) A^{(2k)}(\sigma) + 2k\sigma-2k+1$$
and $A^{(2k)}(\sigma)$ is defined similarly to $A(\sigma)$ or $A^*(\sigma)$, but with the quantities $N(\sigma,T)$, $N^*(\sigma,T)$ replaced with the higher order additive energy
$$N^{(k)}(\sigma, T) =  \#\left\{ (\rho_1, \dots, \rho_{2k}) : \rho_j = \beta_j + i\gamma_j\in \mathcal{Z}(\sigma, T), |\gamma_1 +\dots+ \gamma_k - \gamma_{k+1} - \gamma_{2k}|\le1 \right\}.$$
The possibility of using such higher order energies in this fashion was first explored in \cite{baz-3}.  Unfortunately there are no known unconditional bounds on these energies that are not trivial consequences of existing bounds on $N(\sigma,T)$ or $N^*(\sigma,T)$.\end{remark*}

\section{Numerical calculations}\label{numerics-sec}

We now report on the bounds on $\mu(\theta)$ one obtains from \Cref{refined-bound} (or \Cref{gen-bound}) after applying various known or conjectured bounds on $A(\sigma)$.

As mentioned in the introduction, on the Riemann hypothesis (RH) we have that $\mu(\theta) \leq 1-\theta$ for $0 < \theta \leq 1/2$, and $\mu(\theta)=-\infty$ for $1/2 < \theta \leq 1$.  This appears to be the limit of what one can accomplish purely from the explicit formula.

If one only assumes the Lindel\"of hypothesis (LH), then as mentioned in the introduction, we know that $A(\sigma) \leq 2$ for $1/2 \leq \sigma \leq 3/4$ and $A(\sigma) \leq 0$ for $3/4 < \sigma < 1$.  From \eqref{muth-simp} we then have $\mu(\theta)=-\infty$ for $1/2 < \theta \leq 1$, while for $0 < \theta \leq 1/2$
 $$
   \mu(\theta) \leq \max \left( 1-\theta, \sup_{1/2 < \sigma \leq 3/4} \mu_{2,\sigma}(\theta) \right)
$$ 
and for $1/2 < \sigma \leq 3/4$ we have
$$
\mu_{2,\sigma}(\theta) \leq 2(1-\theta)(1-\sigma) + 2\sigma-1 \leq 1 - \frac{\theta}{2}
$$
and thus
$$ \mu(\theta) \leq 1 - \frac{\theta}{2}$$
for $0 < \theta \leq 1/2$.

If one just assumes the density hypothesis (DH), then we again have $\mu(\theta)=-\infty$ for $1/2 < \theta \leq 1$.  However, to get non-trivial bounds on $\mu(\theta)$ for $0 < \theta \leq 1/2$ by this method, one needs to take advantage of further zero density estimates (improving upon DH) for $\sigma$ close to one, since $\mu_{2,\sigma}(\theta)$ converges to $1$ in the limit $\sigma \to 1$.  For instance, the condition $A(\sigma) \geq \frac{1}{1-\theta}-\eps$, combined with the zero-density estimate of Pintz \cite{pintz} ensures that $\sigma \leq \frac{23}{24}$ for $\eps$ small enough, which when combined with the density hypothesis gives
$$
\mu_{2,\sigma}(\theta) \leq 2(1-\theta)(1-\sigma) + 2\sigma-1 \leq 1 - \frac{\theta}{12}
$$
and thus
$$ \mu(\theta) \leq 1 - \frac{\theta}{12}$$
for $0 < \theta \leq 1/2$.  But this bound is not optimal, due to the known improvements upon the density hypothesis for $\sigma > 25/32$, as well as bounds on the additive energy of zeroes.  The explicit evaluation of the bounds produced by \Cref{refined-bound} is complicated to state exactly, but can be performed numerically without difficulty; see \Cref{fig:conditional}.

\begin{figure}
  \centering
  \includegraphics[width=0.8\textwidth]{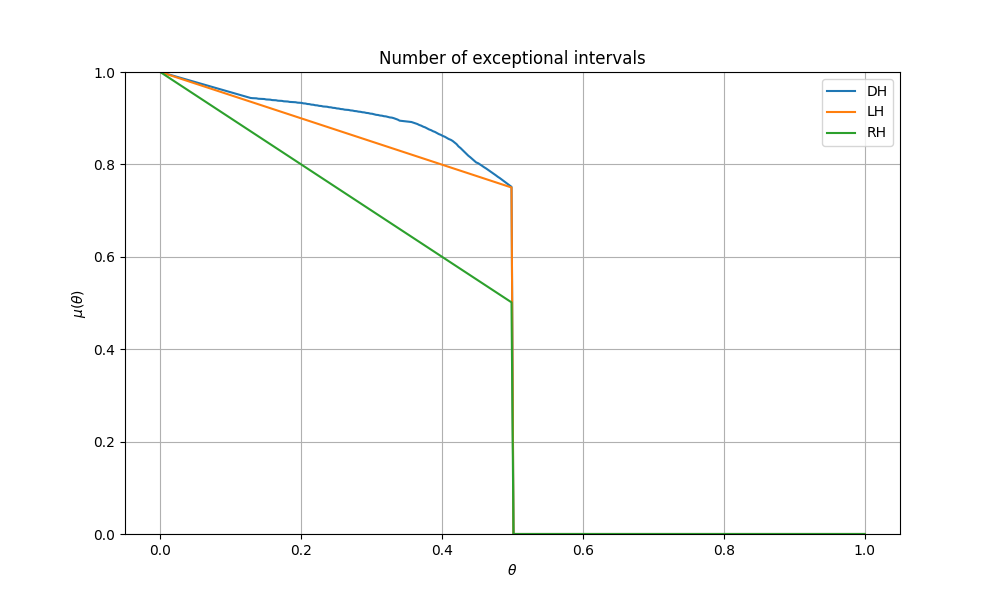}
  \caption{Upper bounds on $\mu(\theta)$ assuming the density, Lindel\"of, and Riemann hypotheses.
  }\label{fig:conditional}
  \end{figure}

The estimates in \Cref{baz-bound}(iii) can be deduced from the the following classical estimates:
\begin{itemize}
  \item The Ingham--Huxley estimate \cite{ingham}, \cite{huxley}
$$ A(\sigma) \leq \min\left( \frac{3}{2-\sigma}, \frac{3}{3\sigma-1}\right),$$
valid for all $1/2 < \sigma < 1$;
\item The Jutila density estimate \cite{jutila} $A(\sigma) \leq 2$, valid for all $\sigma > 11/14$; and
\item The Heath-Brown estimates \cite{HB-2}, which give
$$A^*(\sigma) \leq \frac{10-11\sigma}{(2-\sigma)(1-\sigma)}$$
for $1/2 \leq \sigma \leq 2/3$,
$$A^*(\sigma) \leq \frac{18-19\sigma}{(4-2\sigma)(1-\sigma)}$$
for $2/3 < \sigma \leq 3/4$, and
$$A^*(\sigma) \leq \frac{12}{4\sigma-1}$$
for $3/4 < \sigma < 1$.
\end{itemize}

For instance, in the range $\frac{1}{2} \leq \theta \leq \frac{11}{21}$, the Jutila estimate and the condition $A(\sigma) \geq \frac{1}{1-\theta}-\eps$ then restricts one to the range $\sigma \leq \frac{11}{14}$.  Here, one can use the Heath-Brown estimates and some calculation to obtain the bound
$$ \mu_{4,\sigma}(\theta) \leq \mu_{4,3/4}(\theta) = \frac{3(1-\theta)}{2},$$
giving the first part of \Cref{baz-bound}(iii); the other two cases are treated by a similar, but more complicated, analysis.

In \cite{baz-2} it was also claimed that these methods give the bounds
$$ \mu(\theta) \leq \frac{11-6\theta}{10}$$
for $\frac{1}{6} < \theta \leq \frac{121}{173}$ and
$$ \mu(\theta) \leq \frac{3(1-\theta)}{2}$$
for $\frac{121}{173} \leq \theta \leq \frac{1}{2}$.  Unfortunately, we were not able to replicate these results.  In our language, the problem is that the Jutila density estimate no longer permits one to restrict to the range $\sigma \leq \frac{11}{14}$ when $\theta < 1/2$.  In \cite{baz-2}, the corresponding issue is with  the bound in \cite[(13)]{baz-2}: if one takes for instance $\sigma = \frac{11}{14}+2\eps$ and applies \cite[(12)]{baz-2}, one needs $X^{\theta-1+\sigma} T^{2(1-\sigma)}$ to be significantly less than $X^\theta$, but as $T$ is essentially $X^{1-\theta}$, this is only true for $\theta > 1/2$.  This issue will also restrict the range of $\alpha$ for which the proofs of \cite[Theorems 1,2]{baz-2} are complete.

By a straightforward\footnote{Code for this calculation will be uploaded to \cite{antedb}.} computer calculation (discretizing $\sigma$, $\theta$ to a finite range) one can numerically compute the bound on $\mu(\theta)$ obtained from \Cref{refined-bound} combined with \Cref{zero_density_table} and \Cref{zero-density-energy-table}; see \Cref{fig:unconditional}.  Interestingly, for a significant range of $\theta$, these bounds would not be improved under the assumption of the density hypothesis.  One could in principle give exact formulae for these bounds, but they are not particularly enlightening (and will likely be superseded as additional zero density estimates are proven).  We content ourselves with two sample calculations:
\begin{itemize}
  \item If $\theta=1-1/A_0 = 17/30$ for $A_0 = 30/13$, then the condition $A(\sigma) \geq \frac{1}{1-\theta}-\eps$ (together with the Ingham and Guth--Maynard zero density estimates) places $\sigma$ arbitrarily close to $7/10$, and then by the Heath--Brown estimates we have $A^*(\theta)$ arbitrarily close to $235/39$, and then $\mu_{4,\sigma}(\theta)$ is arbitrarily close to $7/12$; thus
  $$ \mu(17/30) \leq 7/12.$$
 \item If $\theta=1-2/A_0+\Delta = 2/15+\Delta$ for $A_0=30/13$ and $\Delta > 0$ is sufficiently small, then the condition $A(\sigma) \geq \frac{1}{1-\theta}-\eps$ restricts $\sigma$ to be bounded away from $1$ (for instance one can take $\sigma \leq 23/24$).  A routine calculation then shows that
 $$ \mu_{2,\sigma}(\theta) \leq \mu_{2,7/10}(\theta) = 1 - \frac{9}{13} \Delta$$
 and thus
 $$ \mu(2/15+\Delta) \leq 1 - \frac{9}{13} \Delta.$$ 
\end{itemize}

\begin{figure}
  \centering
  \includegraphics[width=0.8\textwidth]{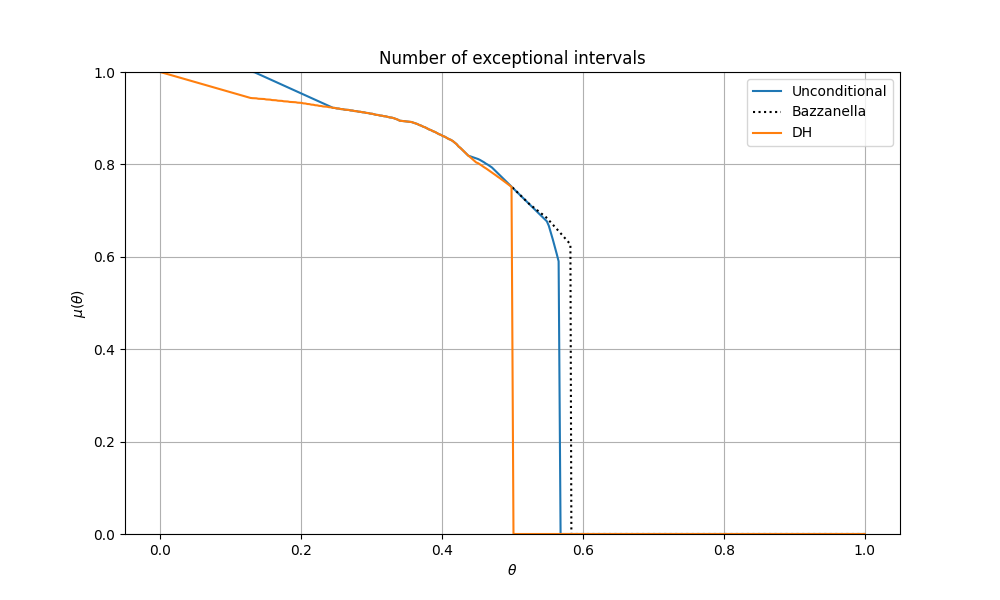}
  \caption{Best known unconditional bounds on $\mu(\theta)$, compared against the prior bounds of Bazzanella (for $\theta \geq 1/2$), as well as the stronger bounds assuming the density hypothesis.
  }\label{fig:unconditional}
  \end{figure}

\bibliographystyle{amsplain}

\end{document}